\theoremstyle{plain}
\numberwithin{equation}{section}
\newtheorem{theorem}{Theorem}[section]
\newtheorem{cor}[theorem]{Corollary}
\newtheorem{Lemma}[theorem]{Lemma}
\theoremstyle{definition}
\newtheorem*{acknowledgement}{\textnormal{\textbf{Acknowledgements}}}
\newtheorem{definition}[theorem]{Definition}
\newtheorem{remark}{Remark}[theorem]
\newcommand{\x}{\mathbf{x}}
\newcommand{\z}{\mathbf{z}}
\newcommand{\T}{\mathsf{T}}
\newcommand{\A}{\mathsf{A}}
\newcommand{\X}{\mathbb{X}}
\newcommand{\Y}{\mathbb{Y}}
\newcommand{\M}{\mathscr{M}}
\newcommand{\B}{\mathcal{B}}
\newcommand{\F}{\mathbb{F}}
\newcommand{\C}{\mathbf{Co}}
\newcommand{\R}{\mathfrak{Re}}
\newcommand{\HS}{\mathbb{H}}
\begin{document}

\title[The Weak differentiability of norm]{The Weak differentiability of norm and a generalized Bhatia-\v{S}emrl Theorem}

\author[Saikat Roy]{Saikat Roy}

\address[Roy]{Department of Mathematics\\ Indian Institute of Technology Bombay\\ Mumbai \\ India}
\email{saikatroy.cu@gmail.com}

                \newcommand{\acr}{\newline\indent}

\subjclass[2020]{Primary 46G25, 47A30 Secondary 46A32}
\keywords{Multilinear maps; norm attainment; Birkhoff-James orthogonality; Bhatia-\v{S}emrl Theorem, Weak differentiability; semi-inner-product.}

\thanks{The research of the author is sponsored by the Institute Post Doctoral Fellowship at Indian Institute of Technology Bombay.}

\begin{abstract}
We completely characterize the weak differentiability (or, in other words Gateaux differentiability) of the norm in the spaces of bounded multilinear maps. Also, we obtain a multilinear generalization of the well-known Bhatia-\v{S}emrl theorem on Birkhoff-James orthogonality.
\end{abstract}

\maketitle

\section{Introduction}
The aim of this article is to characterize the weak differentiability of norm in the spaces of bounded multilinear maps that works equally well in the contexts of (real and complex) normed linear spaces and Hilbert spaces, regardless of the dimension (finite or infinite).
Let $(\X, \|\cdot\|)$ be a normed linear space over a field $\mathbb{F}$ ($\mathbb{R}$ or $\mathbb{C}$). The norm $\|\cdot\|$ is said to be \emph{weakly differentiable} or \emph{Gateaux differentiable} at a non-zero point $x_0\in \X$, if 
\[\lim_{t\to 0} \frac{\|x_0+ty\|-\|x_0\|}{t}\]
exists finitely for all $y\in \X$. The study of differentiability of norm in the spaces of operators has a rich literature background \cite{Abatzoglou, Heinrich, Hennefeld, Holub, Keckic}. The initiation of such a study can be traced back to the seminal works due to Heinrich, followed by Abatzoglou and Hennefeld. More recently, the weak differentiability of norm in the spaces of bounded linear (bilinear) operators have been studied in \cite{PSG, Sain2, SPMR}, in the setting of real normed linear spaces. It is worth mentioning that the current work presents a tractable characterization of Birkhoff-James orthogonality in the spaces of multilinear maps which facilitates a significant improvement of the previously obtained results on the weak differentiability of the norm in the spaces of linear (bilinear) operators.

\bigskip

\textbf{Notations and terminologies}. Throughout the paper, $\X$, $\X_i,~\Y$ denote normed linear spaces ($1\leq i \leq k~;$ $k\in \mathbb{N}$), over the field $\F$ of real or complex numbers ($\mathbb{R}$ or $\mathbb{C}$). The symbol $\HS$ is reserved for a Hilbert space equipped with the standard inner product $\langle \cdot, \cdot \rangle.$ A \emph{$k$-linear map} $\T:\X_1\times \dots \times \X_k \to \Y$ is a function that is separately linear in each variable. For simplicity, we denote the norm in each $\X_i$ ($1\leq i \leq k$) and $\Y$ by the same symbol $\|\cdot\|.$ The map $\T$ is said to be \emph{bounded}, if there exists a positive constant $C$ such that $\|\T(x_1, \dots, x_k)\|\leq C\|x_1\|\dots\|x_k\|$ for all $(x_1,\dots,x_k)\in \X_1\times \dots \times \X_k.$ In that case the norm of $\T$ is defined by 
\[\|\T\|=\sup\{\|\T(x_1,\dots, x_k)\|:~\|x_1\|=\dots=\|x_k\|=1\}.\]
The product space $\X_1\times \dots \times \X_k$ is always equipped with the product topology. Note that a bounded $k$-linear map is continuous and vice versa. We use the symbol $\x$ to denote a member $(x_1,\dots, x_k)$ in $\X_1\times \dots \times \X_k$ while the members of a normed space $\X$ are simply written as $x.$ The zero vector in any space other than the scalar field is written as $\theta$. The closed unit ball and the unit sphere of a normed linear space $\X$ are denoted by $B_\X$ and $S_\X$, respectively. The symbol $\X^*$ stands for the topological dual of $\X.$ We write $\B^{k}(\X_1\times \dots \times \X_k, \Y)$ to denote the space of all bounded $k$-linear maps from $\X_1\times \dots \times \X_k$ to $\Y$, endowed with the usual supremum norm on $S_{\X_1}\times \dots \times S_{\X_k}.$ In particular when $k=1$, the space $\B^{1}(\X,\Y)$ (or $\B^{1}(\X)$ when $\X=\Y$) is the space of all bounded linear operators from $\X$ to $\Y$, endowed with the usual operator norm. The identity operator on $\X$ is denoted by $\mathbf{I}.$ Let $M_\T$ stand for the norm attainment set of the $k$-linear map $\T$, i.e.,
\[M_\T:=\left\{\x\in S_{\X_1}\times \dots \times S_{\X_k}:~\|\T \x\|= \|\T\|\right\}.\]
\noindent A standard compactness argument ensures that $M_\T\neq \emptyset$, when $\X_i$ is finite-dimensional for each $i=1,\dots,k$. We cannot ensure the same if any $\X_i$ is infinite-dimensional. However, in that case we can always ensure the existence of a norming sequence $(\x_n) \subseteq S_{\X_1}\times \dots \times S_{\X_k}$ such that $\|\T\x_n\|\to \|\T\|$. By Hahn-Banach Theorem, for each $n\in \mathbb{N}$, we can find $y_n^*\in S_{\Y^*}$ such that $y_n^*(\T\x_n)=\|\T\x_n\|.$ Thus, the following collection of sequences
\[\M_\T:=\left\{((\x_n, y_n^*))\subseteq \left(S_{\X_1}\times \dots \times S_{\X_k}\right)\times S_{\Y^*}:~y_n^*(\T \x_n)\to \|\T\|\right\},\]
\noindent is always non-empty, regardless $\X_i,\Y$ are finite or infinite-dimensional for $1\leq i \leq k.$

\bigskip

Following Birkhoff \cite{Birkhoff}, we say that an element $x\in \X$ is Birkhoff-James orthogonal to another element $y\in \X$, written as $x\perp_B y$, if $\|x+\lambda y\|\geq \|x\|$ for all scalars $\lambda.$ James \cite{James2} showed that $x\perp_B y$ if and only if there exists $x^*\in S_{\X^*}$ such that $x^*(x)=\|x\|$ and $x^*(y)=0.$ Given any non-zero $x\in \X$, the non-empty collection $J(x):=\{x^*\in S_{\X^*}:~x^*(x)=\|x\|\}$ is known as the collection of all support functionals at $x$. The point $x$ is called smooth if $J(x)$ is a singleton set. Weak differentiability of norm can also be characterized in terms of Birkhoff-James orthogonality and support functionals. Let us recall that famous result due to James which will play the central role in this paper.

\begin{theorem}[\cite{James2}, Theorem 4.2]\label{Primary Result}
Let $\X$ be a normed linear space and let $x\in \X$ be non-zero. Then the following are equivalent:

\begin{itemize}
    \item[(i)] Birkhoff-James orthogonality is right-additive at $x$, i.e., $x\perp_B y$ and $x\perp_B z$ imply that $x\perp_B (y+z)$ for all $y, z\in \X$.
    
    \item[(ii)] $x$ is smooth.
    
    \item[(iii)] The norm in $\X$ is weakly differentiable at $x$.
\end{itemize}
\end{theorem}

\bigskip

Including the introductory part, the present work is demarcated into three sections. In the second section, we completely characterize the Birkhoff-James orthogonality of multilinear maps, in the setting of normed linear spaces, which is a new addition to the existing literature. The said characterization, whenever restricted to the setting of Hilbert space, leads to a multilinear generalization of the well-known Bhatia-\v{S}emrl Theorem in both finite and infinite-dimensional cases. Finally, in the third section, we employ the results of the second section to characterize the weak differentiability of the norm in the spaces of multilinear maps. The cumulative results of this section extend and strengthen some earlier results related to the weak differentiability of the norm in the spaces of linear operators in both finite and infinite-dimensional contexts \cite{PSG, Sain2, SPM}.

\vspace{1mm}  

\section{Orthogonality of Multilinear Maps}

\vspace{2mm}

We begin with a lemma which will be used in the sequel. Given any $\lambda \in \mathbb{C}$, let $\R~\lambda$ and $\overline{\lambda}$ denote the real part of $\lambda$ and conjugate of $\lambda$, respectively.

\begin{Lemma}\label{Lemma:1}
Let $\X_i,~\Y$ be normed linear spaces; $1\leq i \leq k,$ and let $\T,~\A \in \B^{k}\left(\X_1\times \dots \times \X_k, \Y\right)$. Then the collection
\begin{align}\label{Set Omega}
\Omega(\T,\A):=\left\{\lambda\in \F:~ y_n^*(\A \x_n)\to \lambda,~((\x_n, y_n^*))\in \M_\T\right\}, 
\end{align}
is a compact subset of $\F.$
\end{Lemma}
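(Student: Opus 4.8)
The plan is to establish two things about $\Omega(\T,\A)$: that it is bounded, and that it is closed in $\F$; together these give compactness since $\F$ is $\mathbb R$ or $\mathbb C$.

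First I would dispose of boundedness. For any $((\x_n,y_n^*))\in\M_\T$ and any $n\in\mathbb N$ we have $|y_n^*(\A\x_n)|\le \|y_n^*\|\,\|\A\x_n\|\le \|\A\|$, since $\|y_n^*\|=1$ and each $\x_n\in S_{\X_1}\times\dots\times S_{\X_k}$. Hence every limit point $\lambda$ arising as in \eqref{Set Omega} satisfies $|\lambda|\le\|\A\|$, so $\Omega(\T,\A)\subseteq\{\lambda\in\F:|\lambda|\le\|\A\|\}$ is bounded.

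The substantive part is closedness. Suppose $(\lambda_m)\subseteq\Omega(\T,\A)$ with $\lambda_m\to\lambda$ in $\F$; I must produce a single collection of sequences $((\x_n,y_n^*))\in\M_\T$ witnessing $\lambda$. For each $m$, fix a witness $((\x_n^{(m)},y_n^{(m)*}))_n\in\M_\T$, so that $y_n^{(m)*}(\T\x_n^{(m)})\to\|\T\|$ and $y_n^{(m)*}(\A\x_n^{(m)})\to\lambda_m$ as $n\to\infty$. The idea is a diagonal extraction: for each $m$ choose an index $n(m)$, increasing in $m$, large enough that simultaneously $\bigl|\,y_{n(m)}^{(m)*}(\T\x_{n(m)}^{(m)})-\|\T\|\,\bigr|<\tfrac1m$ and $\bigl|\,y_{n(m)}^{(m)*}(\A\x_{n(m)}^{(m)})-\lambda_m\,\bigr|<\tfrac1m$. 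Then set $\widetilde{\x}_m:=\x_{n(m)}^{(m)}$ and $\widetilde{y}_m^*:=y_{n(m)}^{(m)*}$. This new collection lies in $(S_{\X_1}\times\dots\times S_{\X_k})\times S_{\Y^*}$, and $\widetilde{y}_m^*(\T\widetilde{\x}_m)\to\|\T\|$, so $((\widetilde{\x}_m,\widetilde{y}_m^*))\in\M_\T$; moreover $\widetilde{y}_m^*(\A\widetilde{\x}_m)\to\lambda$ by the triangle inequality together with $\lambda_m\to\lambda$. Hence $\lambda\in\Omega(\T,\A)$, proving $\Omega(\T,\A)$ is closed, and therefore compact.

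The main obstacle I anticipate is purely bookkeeping: ensuring the diagonal choice of indices $n(m)$ can be made strictly increasing while meeting both approximation requirements at once. This is routine — each of the two convergences (as $n\to\infty$, for fixed $m$) furnishes a threshold index, and one takes $n(m)$ past the maximum of those thresholds and past $n(m-1)$. No compactness of the underlying spaces $\X_i$ or $\Y^*$ is needed, which is exactly why the argument works uniformly in the finite- and infinite-dimensional cases; all one uses is the definition of $\M_\T$ and the elementary bound $|y^*(\A\x)|\le\|\A\|$ on the relevant sphere.
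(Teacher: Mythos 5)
Your proposal is correct and follows essentially the same route as the paper: boundedness via the elementary estimate $|y_n^*(\A\x_n)|\le\|\A\|$, and closedness via the same diagonal extraction of indices $n(m)$ satisfying both $\tfrac1m$-approximation conditions simultaneously. No gaps.
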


\begin{proof}
Evidently $\Omega(\T,\A)$ is bounded. Consider a sequence $(\lambda_j)\subseteq \Omega(\T,\A)$ such that $\lambda_j\to \lambda_0$, for some $\lambda_0\in \F$. We show that $\lambda_0\in \Omega(\T,\A).$ Clearly, for each $j\in \mathbb{N}$, there exists a sequence $\left(\x_{n,j}, y^*_{n,j}\right)\subseteq \left(S_{\X_1}\times \dots \times S_{\X_k}\right)\times S_{\Y^*}$ such that
\[{y^*_{n,j}\left(\A \x_{n,j}\right)}\to \lambda_j \quad \mathrm{and} \quad y^*_{n,j}\left(\T \x_{n,j}\right)\to \|\T\|,~\mathrm{as}~n\to \infty.\]
For each $j\in \mathbb{N}$, choose $m_j\in \mathbb{N}$ in such a way that the following hold:
\[(i)~\left|{y^*_{m_j,j}\left(\A \x_{m_j,j}\right)}-\lambda_j\right| < \frac{1}{j},\quad (ii)~\left|y^*_{m_j,j}\left(\T \x_{m_j,j}\right)-\|\T\|\right|< \frac{1}{j}.\]
\noindent For each $j\in \mathbb{N}$, put
\[\widetilde{\x}_j=\x_{m_j,j},~ \widetilde{y}^*_j=y^*_{m_j,j}\]
Note that
\begin{align*}
\left|{\widetilde{y}^*_j(\A \widetilde{\x}_j)}-\lambda_0\right|  & \leq \left|{\widetilde{y}^*_j(\A \widetilde{\x}_j)}-\lambda_j\right|+|\lambda_j-\lambda_0|\\
& < \dfrac{1}{j}+|\lambda_j-\lambda_0|.
\end{align*}

\noindent Letting $j\to \infty,$ we have ${\widetilde{y}^*_j(\A \widetilde{\x}_j)}\to \lambda_0.$ Also, $\widetilde{y}^*_j(\T\widetilde{\x}_j)\to \|\T\|$, as $j\to \infty$. Therefore, $((\widetilde{\x}_j, \widetilde{y}^*_j))\in \M_\T$ and $\lambda_0\in \Omega(\T,\A)$.
\end{proof}

For a non-empty subset $D$ of $\F$, let $\C(D)$ denote the convex hull of $D$. Note that $\C(D)$ is a compact subset of $\F$, whenever $D$ is compact. Therefore, the preceding lemma ensures that $\C(\Omega(\T,\A))$ is a compact subset of $\F$, for any $\T,~\A \in \B^{k}\left(\X_1\times \dots \times \X_k, \Y\right)$. We use this fact in the following result to completely characterize Birkhoff-James orthogonality in $\B^{k}\left(\X_1\times \dots \times \X_k, \Y\right).$ This is one of the main results of the paper.

\medskip

\begin{theorem}\label{Theorem:1}
Let $\X_i,~\Y$ be normed linear spaces; $1\leq i \leq k,$ and let $\T,~\A \in \B^{k}\left(\X_1\times \dots \times \X_k, \Y\right)$ be non-zero. Then the following are equivalent:

\begin{itemize}
    \item[(i)] $\T\perp_B \A$.
 
 \medskip
 
    \item[(ii)] $0\in \C(\Omega(\T,\A)),$ where $\Omega(\T,\A)$ is the subset of scalars defined by (\ref{Set Omega}).
\end{itemize}
\end{theorem}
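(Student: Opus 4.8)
The plan is to prove the two implications separately, using James's characterization of Birkhoff-James orthogonality (stated in the introduction) together with Lemma \ref{Lemma:1} and a separation argument.

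\medskip

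\textbf{Proof of (ii) $\Rightarrow$ (i).} Suppose $0 \in \C(\Omega(\T,\A))$. Then there exist finitely many scalars $\lambda_1, \dots, \lambda_m \in \Omega(\T,\A)$ and convex weights $t_1, \dots, t_m \geq 0$ with $\sum t_\ell = 1$ such that $\sum_\ell t_\ell \lambda_\ell = 0$. For each $\ell$ choose, by definition of $\Omega(\T,\A)$, a collection $((\x_n^{(\ell)}, y_n^{*(\ell)})) \in \M_\T$ with $y_n^{*(\ell)}(\A\x_n^{(\ell)}) \to \lambda_\ell$ and $y_n^{*(\ell)}(\T\x_n^{(\ell)}) \to \|\T\|$. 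For an arbitrary scalar $\mu$, I would estimate, for each fixed $n$,
\[
\|\T + \mu\A\| \geq \left\| \sum_{\ell=1}^m t_\ell \, y_n^{*(\ell)}\big( (\T+\mu\A)\x_n^{(\ell)} \big) \right\|
\]
is not quite the right move since the $\x_n^{(\ell)}$ differ; instead I would use that $\|\T+\mu\A\| \geq |y_n^{*(\ell)}((\T+\mu\A)\x_n^{(\ell)})|$ for each $\ell$ and $n$, take real parts, multiply by $t_\ell$, and sum: $\|\T+\mu\A\| \geq \sum_\ell t_\ell \,\R\big(y_n^{*(\ell)}(\T\x_n^{(\ell)}) + \mu\, y_n^{*(\ell)}(\A\x_n^{(\ell)})\big)$. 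Letting $n \to \infty$ gives $\|\T+\mu\A\| \geq \sum_\ell t_\ell\big(\|\T\| + \R(\mu\lambda_\ell)\big) = \|\T\| + \R\big(\mu \sum_\ell t_\ell \lambda_\ell\big) = \|\T\|$. Since $\mu$ was arbitrary, $\T \perp_B \A$.

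\medskip

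\textbf{Proof of (i) $\Rightarrow$ (ii).} I argue by contraposition: assume $0 \notin \C(\Omega(\T,\A))$ and show $\T \not\perp_B \A$. Since $\C(\Omega(\T,\A))$ is a compact convex subset of $\F$ (using Lemma \ref{Lemma:1} and the remark preceding the theorem) not containing $0$, the Hahn-Banach separation theorem yields a real-linear functional separating $0$ strictly from it; concretely there is a scalar $\beta$ (a unit vector giving the separating direction, realized as $z \mapsto \R(\overline{\beta} z)$ in the complex case, or simply a sign in the real case) and $\delta > 0$ with $\R(\overline{\beta}\, z) \geq \delta$ for all $z \in \C(\Omega(\T,\A))$, in particular for all $z \in \Omega(\T,\A)$. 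Now take any collection $((\x_n, y_n^*)) \in \M_\T$. The scalar sequence $(y_n^*(\A\x_n))$ is bounded (by $\|\A\|$), so it has a convergent subsequence with some limit $\lambda$, and by construction $\lambda \in \Omega(\T,\A)$, hence $\R(\overline{\beta}\lambda) \geq \delta$. The key computation is then to choose $\mu = -\varepsilon\beta$ for small $\varepsilon > 0$ and estimate $\|\T - \varepsilon\beta\A\|$. Since $M_\T$ may be empty, I cannot simply evaluate at a norm-attaining point; instead I would argue that $\|\T - \varepsilon\beta\A\| < \|\T\|$ by a compactness/contradiction argument: if not, there is a norming sequence $((\x_n, y_n^*))$ for which $\|(\T-\varepsilon\beta\A)\x_n\| \to \|\T-\varepsilon\beta\A\| \geq \|\T\|$, one extracts (as above) $y_n^*(\A\x_n) \to \lambda \in \Omega(\T,\A)$ along a subsequence, and also — after passing to a further subsequence and using that $y_n^*(\T\x_n)$ is bounded with $\limsup |y_n^*(\T\x_n)| \le \|\T\|$ — derives $\|\T\| \le \limsup |y_n^*((\T - \varepsilon\beta\A)\x_n)| \le \|\T\| - \varepsilon\,\R(\overline{\beta}\lambda) \le \|\T\| - \varepsilon\delta$, a contradiction. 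Hence $\|\T - \varepsilon\beta\A\| < \|\T\|$, so $\T \not\perp_B \A$.

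\medskip

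\textbf{Main obstacle.} The delicate point is the direction (i) $\Rightarrow$ (ii) in the infinite-dimensional case, where $M_\T$ can be empty and one must work entirely with the sequences in $\M_\T$; the crux is ensuring that \emph{every} norming sequence for a perturbation $\T - \varepsilon\beta\A$ produces, via a subsequence, an element of $\Omega(\T,\A)$ on which the separating functional is bounded below — this is exactly where the compactness of $\C(\Omega(\T,\A))$ from Lemma \ref{Lemma:1}, together with a careful diagonal/subsequence extraction coupling the $\A$-values and the $\T$-values, is needed. The real-versus-complex bookkeeping (taking real parts of $y_n^*((\T+\mu\A)\x_n)$ and choosing the perturbation $\mu$ along the separating direction $\beta$) is the other place requiring care, but it is routine once the separation is set up correctly.
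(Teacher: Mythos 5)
Your proof of (ii)$\Rightarrow$(i) is correct and in fact takes a slightly cleaner route than the paper: you average the inequalities $\|\T+\mu\A\|\geq \R\, y_n^{*(\ell)}\big((\T+\mu\A)\x_n^{(\ell)}\big)$ directly over the convex weights, whereas the paper first invokes Carath\'eodory's theorem to produce, for each direction $\gamma$, a single $\lambda_0\in\Omega(\T,\A)$ with $\R\,\gamma\lambda_0\geq 0$ and then works with one sequence. Both arguments are valid; yours avoids the pigeonhole step.

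The direction (i)$\Rightarrow$(ii), however, has a genuine gap, precisely at the point you flag as the crux. After the separation $\R(\overline{\beta}z)\geq\delta$ on $\C(\Omega(\T,\A))$, you take a norming sequence $((\x_n,y_n^*))$ for the perturbed map $\T-\varepsilon\beta\A$ and assert that a subsequence gives $y_n^*(\A\x_n)\to\lambda\in\Omega(\T,\A)$. This is unjustified: for the limit to lie in $\Omega(\T,\A)$ the sequence must belong to $\M_\T$, i.e.\ $y_n^*(\T\x_n)\to\|\T\|$ (or at least $|y_n^*(\T\x_n)|\to\|\T\|$, after which one rotates the functionals by a phase). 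A norming sequence for $\T-\varepsilon\beta\A$ only forces $\liminf|y_n^*(\T\x_n)|\geq\|\T\|-\varepsilon\|\A\|$; if the limit $a$ of $y_n^*(\T\x_n)$ satisfies $\|\T\|-\varepsilon\|\A\|<|a|<\|\T\|$, then the limit of $y_n^*(\A\x_n)$ is not controlled by the separation at all, and $|a-\varepsilon\beta\lim y_n^*(\A\x_n)|\geq\|\T\|$ remains entirely possible. Knowing merely that $\limsup|y_n^*(\T\x_n)|\leq\|\T\|$ does not rescue your displayed inequality chain. What is missing is a \emph{uniform} estimate, and this is exactly what the paper's Step II supplies: it proves that on the set $G$ of pairs where the cross term $\R\,y^*(\T\x)\overline{y^*(\A\x)}$ fails to exceed $r$, one has $|y^*(\T\x)|\leq\|\T\|-2\varepsilon$ uniformly (the subsequence-plus-phase-rotation argument you envisage is applied there to a hypothetical sequence in $G$ with $|y_n^*(\T\x_n)|\to\|\T\|$, where it legitimately lands in $\Omega(\T,\A)$). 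Only after this uniform gap is secured is the perturbation size chosen, $\mu<\min\{\varepsilon,r\}$, and the norm of $\T-\mu\A$ estimated separately on $G$ (triangle inequality) and off $G$ (quadratic expansion), yielding a bound strictly below $\|\T\|$ that is uniform over the whole sphere. Without such a uniformity step your fixed-$\varepsilon$ contradiction does not close. (There is also a minor conjugation mismatch --- the quadratic expansion produces $\R\big(a\,\overline{\beta}\,\overline{b}\big)$, so the perturbation should be taken along $\overline{\beta}$ rather than $\beta$, or the separation stated for the conjugated set --- but that is routine bookkeeping, as you note.)
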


\begin{proof}
Without loss of generality, let us assume that $\|\A\|=\|\T\|=1$.

\noindent (i)$\implies$(ii): Suppose if possible $0\notin \C(\Omega(\T,\A)).$ We now proceed in three steps to obtain a contradiction.

\medskip

{\bf Step I:} Since $\C(\Omega(\T,\A))\subseteq \F$ is compact, rotating $\C(\Omega(\T,\A))$ suitably, we may assume that $\R~\lambda > r$  for some $0< r < \dfrac{1}{2}$ and for all $\lambda\in \C(\Omega(\T,\A))$. In particular,
\begin{align}\label{Equation:1}
\R~\lambda > r \quad \forall~\lambda\in \Omega(\T,\A). \end{align}

\medskip

{\bf Step II:} Define 
\[ G:= \left\{(\x, y^*)\in \left(S_{\X_1}\times \dots \times S_{\X_k}\right)\times S_{\Y^*}:~\R~y^*(\T \x)\overline{y^*(\A \x)}\leq r \right\}.\]
\noindent We claim that $\sup \{|y^*(\T \x)|:~(\x,y^*)\in G\}< 1-2\varepsilon$ for some $0< \varepsilon < \dfrac{1}{2}.$ Suppose on contrary  that there exists a sequence $((\x_n,y_n^*))\subseteq G$ so that $|y_n^*(\T \x_n)|\to 1.$ Then we can find a subsequence $(n_j)$ of natural numbers such that
\[ (i)~y^*_{n_j}(\T \x_{n_j})\to \sigma,~|\sigma|=1;~(ii)~{y^*_{n_j}(\A \x_{n_j})}\to \alpha.\]
Put $\widetilde{y}_{n_j}^*=\overline{\sigma}~ y^*_{n_j}$ for each $j\in \mathbb{N}.$ We now have $\widetilde{y}_{n_j}^*(\A \x_{n_j})\to \overline{\sigma}\alpha\in \Omega(\T,\A),$ since $((\x_{n_j},\widetilde{y}^*_{n_j}))\in \M_\T.$ Consequently, $\R~\overline{\sigma}\alpha >r$, by (\ref{Equation:1}). Observe that
\[\R~{y}^*_{n_j}(\T \x_{n_j})\overline{{y}^*_{n_j}(\A \x_{n_j})}=\R~\widetilde{y}^*_{n_j}(\T \x_{n_j})\overline{\widetilde{y}^*_{n_j}(\A \x_{n_j})}\to \R~\sigma\overline{\alpha}>r.\]
However, it follows from the definition of $G$ that
\[\R~{y}^*_{n_j}(\T \x_{n_j})\overline{{y}^*_{n_j}(\A \x_{n_j})}\leq r,\qquad \mathrm{for~all}~j\in \mathbb{N}.\]
Therefore, we arrive at a contradiction and the claim is proved.

\medskip

{\bf Step III:} Choose $0< \mu < \min \left\{\varepsilon, r\right\}. $ Then for any $(\x, y^*)\in G$, we have

\[|y^*(\T \x-\mu \A \x)|< 1-2\varepsilon + \mu< 1-\varepsilon.\]
On the other hand, given any $(\x, y^*)\in \left(\left(S_{\X_1}\times \dots \times S_{\X_k}\right)\times S_{\Y^*}\right)\setminus G$
\[|y^*(\T \x- \mu \A \x)|^2\leq 1+\mu^2-2\mu~\R~ y^*(\T \x)\overline{y^*(\A \x)}\leq 1+\mu^2-2\mu r.\]
\noindent In other words,
\begin{align*}
\|\T-\mu \A\|=\sup \left\{|y^*(\T \x- \mu \A \x)|:~(\x,y^*)\in \left(S_{\X_1}\times \dots \times S_{\X_k}\right)\times S_{\Y^*}\right\}< 1=\|\T\|.
\end{align*}
This is a contradiction as $\T\perp_B \A.$

\medskip

\noindent (ii)$\implies$(i): We first show that for any non-zero scalar $\kappa$, there exists $\lambda\in \Omega(\T,\A)$ such that $\R~\kappa \lambda\geq 0.$ Indeed, since $0\in \C(\Omega(\T,\A))$, by Carath\'{e}odory Theorem there exist $\lambda_i\in \Omega(\T,\A), t_i\in [0,1]$, $1\leq i \leq 3;$ such that
\[ \sum\limits_{i=1}^3 t_i\lambda_i=0,~\sum\limits_{i=1}^3t_i=1.\]
Thus, $\sum\limits_{i=1}^3 t_i\kappa \lambda_i=0$ and at least for one $\lambda_i,$ we must have $\R~\kappa \lambda_i\geq 0.$

\medskip

Let $\gamma\in \F$ be non-zero. By the forgoing discussion, we can find $\lambda_0\in \Omega(\T,\A)$ such that $\R~\gamma\lambda_0 \geq 0$. More precisely, we can find $((\x_n,y_n^*))\in \M_\T$ such that 
\[{y_n^*(\A \x_n)}\to \lambda_0~\mathrm{and}~\R~\gamma~ \lambda_0 \geq 0.\]
So, we have
\begin{align*}
\|\T  + \gamma \A\|^2 & \geq |y_n^*(\T \x_n+\gamma \A \x_n)|^2\\
& = |y_n^*(\T \x_n)|^2+ |\gamma y_n^*(\A \x_n)|^2+2~\R~\gamma~ y_n^*(\A\x_n)\overline{y_n^*(\T \x_n)}\\
& \geq |y_n^*(\T \x_n)|^2+ 2~\R~\gamma~ y_n^*(\A\x_n)\overline{y_n^*(\T \x_n)}.
\end{align*}
Letting $n\to \infty,$ we have
\[|y_n^*(\T \x_n)|^2+ 2~\R~\gamma~ y_n^*(\A\x_n)\overline{y_n^*(\T \x_n)}\to 1+ 2~\R~\gamma~ \lambda_0 \geq 1.\]
Consequently, $\|\T+\gamma\A\|\geq\|\T\|.$ Since $\gamma$ was chosen arbitrarily, we get $\T\perp_B \A$. The proof is now complete.
\end{proof}

\begin{remark}\label{Remark:1}
For $k=1$, the above theorem provides a characterization of orthogonality in $\B^{1}(\X,\Y)$ (in both real and complex setting), and to the best of our knowledge, it is a new addition to the existing literature \cite{BG, BS, Roy & Bagchi, RSS, Sain, Sain3, SPM, Turnsek}.
\end{remark}

We have an easier description of $\Omega(\T, \A)$ in the finite-dimensional context. Hence, Theorem \ref{Theorem:1} takes a simpler form. Let us discuss this fact in more detail in the following remark.

\medskip

\begin{remark}\label{Remark:2}
Let $\T,~\A\in \B^{k}\left(\X_1\times \dots \times \X_k, \Y\right)$ be non-zero, where $\X_i,~\Y$ are finite-dimensional Banach spaces $(1\leq i \leq k)$. Consider the collection
\begin{align}\label{Set Omega prime}
\Omega'(\T,\A):=\left\{{y^*(\A \x)}:~\x\in M_\T,~y^*\in J(\T \x)\right\}\subseteq \F.
\end{align}
Given any $\lambda\in \Omega(\T,\A),$ there exists $((\x_n, y_n^*))\in \M_\T$ such that
\[ y_n^*(\T \x_n)\to \|\T\|~\mathrm{and}~{y_n^*(\A \x_n)}\to \lambda.\]
\noindent Let $\pi_i$ denote the $i$-th projection map from $\X_1\times \dots \times \X_k$ into the factor space $\X_i$ for each $i=1,\dots,k.$ Then it is easy to see that $((\pi_i(\x_n))$ has a convergent subsequence for $1\leq i \leq k.$ So we can find a subsequence $(n_p)$ of natural numbers such that $\pi_i\left(\x_{n_p}\right)\to x_i$ for some $x_i\in S_{\X_i}$, $y_{n_p}^*\to y_0^*$ for some $y_0^*\in S_{\Y^*}$ and $1\leq i \leq k.$ Let $\x_0=(x_1, \dots, x_k)$. Evidently $\x_{n_p}\to \x_0.$ Also, due to the continuity of $\T$, we have $\T\x_{n_p}\to \T\x_0$. A straightforward computation gives that
\begin{align*}
y_{n_p}^*(\T \x_{n_p})\to y_0^*(\T \x_0)=\|\T\|\quad \mathrm{and} \quad~{y_{n_p}^*(\A \x_{n_p})}\to {y_0^*(\A \x_0)}=\lambda.
\end{align*}
The above expressions show the following:
\begin{align*}
(i)~\x_0\in M_\T,~(ii)~ y_0^*\in J(\T \x_0),~(iii)~{y_0^*(\A \x_0)}=\lambda.
\end{align*}
Therefore, $\lambda \in \Omega'(\T, \A)$. On the other hand, for any $\lambda'\in \Omega'(\T, \A)$, there exist $\x \in M_\T$ and $y^*\in J(\T \x)$ such that $\lambda'={y^*(\A \x)}.$ Choosing $\x_n = \x$ and $y_n^*=y^*$ for all $n\in \mathbb{N}$, we have $\lambda'\in \Omega(\T,\A).$ This shows that $\Omega(\T,\A)=\Omega'(\T,\A).$ 
\end{remark}

\medskip

The following corollary is an immediate consequence of Theorem \ref{Theorem:1} and Remark \ref{Remark:2} and thus we omit the proof.

\begin{cor}\label{Corollary:1}
Let $\X_i,~\Y$ be finite-dimensional Banach spaces; $1\leq i \leq k,$ and let $\T,~\A \in \B^{k}\left(\X_1\times \dots \times \X_k, \Y\right)$ be non-zero. Then the following are equivalent:

\begin{itemize}
    \item[(i)] $\T\perp_B \A$.

\medskip
    
    \item[(ii)] $0\in \C\left(\Omega'(\T,\A)\right),$ where $\Omega'(\T,\A)$ is the subset of scalars defined by (\ref{Set Omega prime}).
\end{itemize}
\end{cor}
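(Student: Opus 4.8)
The plan is to obtain Corollary \ref{Corollary:1} as a direct corollary of Theorem \ref{Theorem:1}, the only extra ingredient being the set-theoretic identity $\Omega(\T,\A)=\Omega'(\T,\A)$ valid in the finite-dimensional setting. So the first step is to establish this identity. The inclusion $\Omega'(\T,\A)\subseteq\Omega(\T,\A)$ is immediate: given $\x\in M_\T$ and $y^*\in J(\T\x)$, the constant sequence $(\x_n,y_n^*)=(\x,y^*)$ belongs to $\M_\T$ and witnesses $y^*(\A\x)\in\Omega(\T,\A)$. For the reverse inclusion $\Omega(\T,\A)\subseteq\Omega'(\T,\A)$, I would take $\lambda\in\Omega(\T,\A)$, pick a witnessing collection $((\x_n,y_n^*))\in\M_\T$ with $y_n^*(\T\x_n)\to\|\T\|$ and $y_n^*(\A\x_n)\to\lambda$, and exploit finite dimensionality: each $S_{\X_i}$ is compact and $S_{\Y^*}$ is compact, so along a single common subsequence $\pi_i(\x_n)\to x_i\in S_{\X_i}$ for every $1\leq i\leq k$ and $y_n^*\to y_0^*\in S_{\Y^*}$. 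Writing $\x_0=(x_1,\dots,x_k)$, continuity of the multilinear maps $\T,\A$ and of the pairing $(y^*,v)\mapsto y^*(v)$ then yields $y_0^*(\T\x_0)=\|\T\|$ and $y_0^*(\A\x_0)=\lambda$; hence $\x_0\in M_\T$, $y_0^*\in J(\T\x_0)$, and $\lambda\in\Omega'(\T,\A)$. This is precisely the computation already recorded in Remark \ref{Remark:2}, so in the write-up I would simply invoke that remark.

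With $\Omega(\T,\A)=\Omega'(\T,\A)$ in hand the corollary follows in one line: taking convex hulls preserves this equality, so $\C(\Omega(\T,\A))=\C(\Omega'(\T,\A))$, and therefore $0\in\C(\Omega'(\T,\A))$ if and only if $0\in\C(\Omega(\T,\A))$, which by Theorem \ref{Theorem:1} is equivalent to $\T\perp_B\A$. That completes the equivalence of (i) and (ii).

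There is no genuine obstacle here — all the substance lives in Theorem \ref{Theorem:1}. The only point that deserves a line of care is the compactness extraction in the reverse inclusion: one must select a subsequence that works simultaneously for the finitely many sequences $(\pi_1(\x_n)),\dots,(\pi_k(\x_n)),(y_n^*)$, each ranging over a compact metric space, which is routine; and one must check that the limiting functional still norms $\T\x_0$, which follows from $|y_0^*(\T\x_0)-y_n^*(\T\x_n)|\leq\|y_0^*-y_n^*\|\,\|\T\x_n\|+\|\T\x_n-\T\x_0\|\to 0$ together with $y_n^*(\T\x_n)\to\|\T\|$. Since these details are exactly those of Remark \ref{Remark:2}, I would keep the proof of Corollary \ref{Corollary:1} to a single sentence citing Remark \ref{Remark:2} and Theorem \ref{Theorem:1}, as the authors do.
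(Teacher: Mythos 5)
Your proposal is correct and is exactly the paper's route: the authors derive the corollary immediately from Theorem \ref{Theorem:1} together with the identity $\Omega(\T,\A)=\Omega'(\T,\A)$ established in Remark \ref{Remark:2} via the same compactness/constant-sequence arguments you describe. Nothing further is needed.
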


\medskip

Let $\HS$ be an $n$-dimensional Hilbert space. We can think of $\B^{1}(\HS)$ as the space of all $n\times n$ matrices, identified as operators acting on $\HS$ in the usual way. For $k=1$ and $\X=\Y=\HS,$ Corollary \ref{Corollary:1} gives a necessary and sufficient condition of Birkhoff-James orthogonality in $\B^{1}(\HS)$. Moreover, in this case, Corollary \ref{Corollary:1} reduces to the well-known Bhatia-\v{S}emrl Theorem, once we apply the Toeplitz-Hausdorff Theorem \cite{Gustafson}. We sketch the outline of the proof in the following remark.

\medskip

\begin{remark}\label{Remark:3}
Let $\T,~\A$ be two non-zero $n\times n$ matrices acting on $\HS$. It follows from Riesz representation Theorem that given any non-zero $x\in \HS$, the unique support functional $y^*$ at $x$ is of the form $y^*(z)= \left\langle z, \dfrac{x}{\|x\|}\right\rangle$ for all $z\in \HS.$ Now, it is not difficult to see that 
\[\Omega'(\A,\T)= \left\{\left\langle \A x, \frac{\T x}{\|\T\|} \right\rangle:~x\in M_\T\right\}.\]
On the other hand, it is well-known \cite{Sain Paul} that $M_\T= S_{\HS_0}$, where $\HS_0$ is the eigen space of $\T^*\T$ corresponding to the highest eigen value $\|\T\|^2$. Therefore, we have
\[\left\{\left\langle \A x, \frac{\T x}{\|\T\|} \right\rangle:~x\in M_\T\right\}=\left\{\left\langle \A x, \frac{\T x}{\|\T\|} \right\rangle:~x\in S_{\HS_0}\right\},\]
which is convex by the Toeplitz-Hausdorff Theorem \cite{Gustafson}. Consequently, it follows from Corollary \ref{Corollary:1} that 
\[ \T\perp_B \A ~\iff~\exists ~x_0\in S_\HS~\mathrm{such~that}~\|\T x_0\|=\|\T\|~\mathrm{and}~\langle \A x_0, \T x_0 \rangle=0.\]
\end{remark}

\medskip

One can expect more sophisticated characterization of orthogonality in $\B^{k}\left(\HS_1\times \dots \times \HS_k, \HS\right)$ due to the inner product structure of a Hilbert space. For this reason, given any $\T,\A\in \B^{k}\left(\HS_1\times \dots \times \HS_k, \HS\right),$ below we consider another (non-empty) collection of scalars:
\begin{align}\label{Set W(T,A)}
\mathrm{W}(\T,\A):=\left\{\beta\in \F:~\langle \A\x_n, \T\x_n \rangle \to \beta,~(\x_n)\subseteq S_{\HS_1}\times \dots \times S_{\HS_k},~\|\T\x_n\|\to \|\T\|\right\}.
\end{align}

\medskip

The following result completely characterizes Birkhoff-James orthogonality of multilinear maps in the setting of infinite-dimensional Hilbert spaces.

\begin{theorem}\label{Theorem:2}
Let $\HS_i,~\HS$ be Hilbert spaces, $1\leq i \leq k,$ and let $\T,~\A \in \B^{k}\left(\HS_1\times \dots \times \HS_k, \HS\right)$ be non-zero. Then the following are equivalent:

\begin{itemize}
    \item[(i)] $\T\perp_B \A$.
 
 \medskip
    
    \item[(ii)] $0\in \C(\Omega(\T,\A))$, where $\Omega(\T,\A)$ is the subset of scalars defined by (\ref{Set Omega}).

\medskip

\item[(iii)] $0\in \C(\mathrm{W}(\T,\A))$, where $\mathrm{W}(\T,\A)$ is the subset of scalars defined by (\ref{Set W(T,A)}).
\end{itemize}
\end{theorem}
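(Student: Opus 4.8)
The plan is to get the equivalence (i)$\Longleftrightarrow$(ii) for free from Theorem \ref{Theorem:1} (applied with $\X_i=\HS_i$, $\Y=\HS$, since Hilbert spaces are in particular normed linear spaces), and then to obtain (ii)$\Longleftrightarrow$(iii) by proving that the two scalar sets coincide. First I would observe that all three conditions are insensitive to replacing $\T$ by $\T/\|\T\|$: Birkhoff-James orthogonality is homogeneous, the set $\Omega(\T,\A)$ is literally unchanged (the condition defining $\M_\T$ scales with $\T$), and $\mathrm{W}(\T,\A)$ merely gets multiplied by a positive constant, which does not affect whether $0$ lies in its convex hull. So we may assume $\|\T\|=1$, and under this normalization the goal becomes the identity $\Omega(\T,\A)=\mathrm{W}(\T,\A)$; granting it, $\C(\Omega(\T,\A))=\C(\mathrm{W}(\T,\A))$ and (ii), (iii) say the same thing.

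For the inclusion $\mathrm{W}(\T,\A)\subseteq \Omega(\T,\A)$ I would argue as follows. Let $\beta\in \mathrm{W}(\T,\A)$ be witnessed by $(\x_n)\subseteq S_{\HS_1}\times \dots \times S_{\HS_k}$ with $\|\T\x_n\|\to 1$ and $\langle \A\x_n,\T\x_n\rangle\to \beta$. Since $\|\T\x_n\|\to 1>0$, we have $\T\x_n\neq \theta$ for all sufficiently large $n$; discarding the finitely many exceptional indices and relabelling, the Riesz representation theorem provides $y_n^*:=\langle\,\cdot\,,\T\x_n/\|\T\x_n\|\rangle\in S_{\HS^*}$. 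Then $y_n^*(\T\x_n)=\|\T\x_n\|\to 1=\|\T\|$, so $((\x_n,y_n^*))\in \M_\T$, while $y_n^*(\A\x_n)=\langle \A\x_n,\T\x_n\rangle/\|\T\x_n\|\to \beta$, giving $\beta\in \Omega(\T,\A)$. For the reverse inclusion, let $\lambda\in \Omega(\T,\A)$ be witnessed by $((\x_n,y_n^*))\in \M_\T$ with $y_n^*(\A\x_n)\to \lambda$, and represent $y_n^*=\langle\,\cdot\,,u_n\rangle$ with $u_n\in S_\HS$. From $y_n^*(\T\x_n)=\langle \T\x_n,u_n\rangle\to 1$ and the Cauchy--Schwarz bound $|\langle \T\x_n,u_n\rangle|\leq \|\T\x_n\|\leq \|\T\|=1$ one gets $\|\T\x_n\|\to 1$, hence
\[\|\T\x_n-u_n\|^2=\|\T\x_n\|^2-2\,\R\,\langle \T\x_n,u_n\rangle+1\longrightarrow 0.\]
Then $\langle \A\x_n,\T\x_n\rangle=y_n^*(\A\x_n)+\langle \A\x_n,\T\x_n-u_n\rangle$ and $|\langle \A\x_n,\T\x_n-u_n\rangle|\leq \|\A\|\,\|\T\x_n-u_n\|\to 0$, so $\langle \A\x_n,\T\x_n\rangle\to \lambda$; since $(\x_n)\subseteq S_{\HS_1}\times \dots \times S_{\HS_k}$ with $\|\T\x_n\|\to 1$, this yields $\lambda\in \mathrm{W}(\T,\A)$.

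I do not expect a genuine obstacle here: the substance of the theorem is already carried by Theorem \ref{Theorem:1}, and the only point that needs a little care is the passage between the abstract norming functionals $y_n^*\in S_{\HS^*}$ coming from $\M_\T$ and the concrete unit vectors $u_n$ that represent them, which is precisely where the inner-product structure is used (via Riesz representation and the elementary estimate $\|\T\x_n-u_n\|\to 0$). Once that identification is made, conditions (ii) and (iii) are simply two descriptions of the same compact set of scalars after normalization.
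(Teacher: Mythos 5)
Your proposal is correct. The technically substantive step --- the inclusion $\Omega(\T,\A)\subseteq \mathrm{W}(\T,\A)$, obtained by representing the norming functionals $y_n^*$ as $\langle\,\cdot\,,u_n\rangle$ and showing $\|\T\x_n-u_n\|\to 0$ --- is exactly the paper's argument for (ii)$\implies$(iii). Where you diverge is in how the cycle of implications is closed: the paper proves (iii)$\implies$(i) by repeating the norm-estimate computation from Theorem \ref{Theorem:1} (picking $\beta_0\in \mathrm{W}(\T,\A)$ with $\R\,\mu\,\beta_0\geq 0$ and bounding $\|\T+\mu\A\|^2$ from below), whereas you instead prove the easy reverse inclusion $\mathrm{W}(\T,\A)\subseteq\Omega(\T,\A)$ by manufacturing the functionals $y_n^*=\langle\,\cdot\,,\T\x_n/\|\T\x_n\|\rangle$ directly from the witnessing sequence. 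Your route establishes the stronger and cleaner fact that, after normalizing $\|\T\|=1$, the two scalar sets coincide, so that (ii) and (iii) are literally the same condition and all the orthogonality content is delegated to Theorem \ref{Theorem:1}; the paper's route avoids the (harmless) normalization bookkeeping but duplicates a computation it has already done once. Your reduction to $\|\T\|=1$ is justified correctly: $\Omega$ is invariant under the rescaling and $\mathrm{W}$ is scaled by the positive constant $1/\|\T\|$, which does not affect whether $0$ lies in the convex hull, and discarding the finitely many indices with $\T\x_n=\theta$ is legitimate since membership in $\M_\T$ and in $\mathrm{W}(\T,\A)$ is a tail property.
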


\begin{proof}
Without loss of generality, assume that $\|\T\|=\|\A\|=1.$ Note that the equivalence of (i) and (ii) is already established in Theorem \ref{Theorem:1}. So we only prove (ii)$\implies$(iii) and (iii)$\implies$(i).

\medskip

\noindent (ii)$\implies$(iii): It is enough to show that $\Omega(\T,\A)\subseteq \mathrm{W}(\T,\A).$ Let $\lambda\in \Omega(\T,\A).$ Then there exists a sequence $((\x_n,y_n^*))\in \M_\T$ so that 
\[ (i)~y_n^*(\T \x_n)\to 1,~(ii)~{y_n^*(\A\x_n)}\to \lambda.\]
Evidently, $\|\T\x_n\|\to 1.$ By Riesz representation Theorem, for each $n\in \mathbb{N}$, there exists a unique $y_{n}\in S_\HS$ such that 
\[y_{n}^*(z)=\langle z, y_{n} \rangle, \qquad \forall ~z\in \HS.\]

Now,
\begin{align*}
\left\| y_{n}-{\T\x_{n}}\right\|^2 & = \langle y_n,y_n\rangle-\langle y_n, \T\x_n\rangle-\langle \T\x_n, y_n \rangle+\langle \T\x_n,\T\x_n \rangle\\
& = 1-2~\R~y_{n}^*(\T\x_{n})+\|\T\x_n\|^2\\
& \longrightarrow 0,~\mathrm{as}~n\to \infty.
\end{align*}

Thus, 
\begin{align*}
|\langle \A\x_n,\T\x_n\rangle-y_n^*(\A\x_n)| & =|\langle \A\x_n,\T\x_n\rangle-\langle \A\x_n,y_n \rangle|\\
& =|\langle \A\x_n, \T\x_n-y_n\rangle|\\
& \leq \|\T\x_n-y_n\|\to 0,~\mathrm{as}~n\to \infty.
\end{align*}
Therefore, $\langle \A\x_n,\T\x_n\rangle\to \lambda\in \mathrm{W}(\T,\A)$, and $\Omega(\T,\A)\subseteq \mathrm{W}(\T,\A).$

\medskip

\noindent (iii)$\implies$(i): Let $\mu \in \mathbb{F}$ be non-zero. By the same arguments as in the proof ((ii)$\implies$(i)) of Theorem \ref{Theorem:1}, we can find $(\x_n) \subseteq S_{\HS_1}\times \dots \times S_{\HS_k}$ such that
\[(i)~\|\T \x_n\|\to \|\T\|,~(ii)~\langle \A\x_n, \T\x_n \rangle \to \beta_0,~(iii)~\R~\mu~\beta_0 \geq 0.\] 
Therefore, we get
\begin{align*}
\|\T + \mu \A\|^2 & \geq  \langle \T \x_n+ \mu \A\x_n, \T\x_n + \mu \A \x_n \rangle\\
& =  \|\T \x_n\|^2+ \|\mu \A \x_n\|^2+ 2~\R~\mu~\langle \A\x_n, \T\x_n \rangle\\
& \geq  \|\T \x_n\|^2 +   2~\R~\mu~\langle \A\x_n, \T\x_n \rangle.
\end{align*}
Letting $n\to \infty,$ we have 
\[\|\T \x_n\|^2 +   2~\R~\mu~\langle \A\x_n, \T\x_n \rangle\to 1+2~\R~\mu~\beta_0 \geq 1.\]
\noindent Consequently, $\|\T+\mu\A\|\geq\|\T\|.$ Since $\mu$ was chosen arbitrarily, we have $\T\perp_B \A$ and the proof is complete.
\end{proof}

\begin{remark}\label{Remark:4}
In Theorem \ref{Theorem:2}, we assume $k=1$ and $\HS_1=\HS.$ Let $\T,~\A\in \B^{1}(\HS)$ be non-zero. Let us consider the following collection of scalars, $\mathrm{W}_0(\T,\A)$, defined by
\begin{align*}
\mathrm{W}_0(\T,\A):=\{\beta\in \F:~\langle \T x_n, \A x_n \rangle \to \beta,~\|x_n\|=1,~\|\T x_n\|\to \|\T\|\}.
\end{align*}

\noindent The above collection, in the special case when $\A=\mathbf{I}$, is popularly known as the \emph{Maximal numerical range} of $\T$ and is a convex subset of $\F$ (see \cite{Stampfli}). Note that $\mathrm{W}_0(\T,\A)$ is non-empty, closed and convex [\cite{PHD},Theorem 2]. It is now easy to see (for $k=1$ and $\HS_1=\HS$) that
\[ \mathrm{W}(\T,\A)=\left\{\overline{\lambda}:~\lambda\in \mathrm{W}_0(\T,\A)\right\}.\]
This establishes the convexity of $\mathrm{W}(\T,\A)$ (see (\ref{Set W(T,A)})). Thus, for $\T,\A\in \B^1(\HS)$, $\C(\mathrm{W}(\T,\A))$ is $\mathrm{W}(\T,\A)$ itself.
\end{remark}

\medskip

Theorem \ref{Theorem:2} in combination with the above remark facilitates the characterization of operator orthogonality in the Hilbert space setting, which is famously known as Bhatia-\v{S}emrl Theorem \cite{BS}. In that sense, Theorem \ref{Theorem:2} can be regarded as a multilinear generalization of Bhatia-\v{S}emrl Theorem.

\begin{theorem}(Bhatia-\v{S}emrl Theorem in infinite-dimension)
Let $\HS$ be a Hilbert space and let $\T,~\A\in \B^{1}(\HS)$ be non-zero. Then the following are equivalent:

\begin{itemize}
    \item[(i)] $\T\perp_B \A.$

 \medskip
    
    \item[(ii)] There exists $(x_n)\subseteq S_\HS$ such that $\|\T x_n\|\to \|\T\|$ and $\langle \A x_n, \T x_n \rangle \to 0.$
\end{itemize}
\end{theorem}

\vspace{1mm}

\section{Weak differentiability of the norm}

\vspace{2mm}

We devote this section to characterize the weak differentiability of the norm of $\B^{k}(\X_1\times \dots \times \X_k, \Y).$  Most of the results of this section are dependent on the results of the previous section. Let $\mathbb{S}$ denote the unit sphere of the scalar field $\F$. The symbol $\mathbb{S}^k$ denotes the product $\underbrace{\mathbb{S}\times \dots \times \mathbb{S}}_{k-times}$. The following theorem completely characterizes the weak differentiable points in $\B^{k}(\X_1\times \dots \times \X_k, \Y)$ and is the main result of this paper.

\begin{theorem}\label{Theorem:3}
Let $\X_i,~\Y$ be normed linear spaces, for $1\leq i \leq k,$ and let $\T\in \B^{k}\left(\X_1\times \dots \times \X_k, \Y\right)$ be non-zero. Then the following are equivalent:

\begin{itemize}
    \item[(i)] The norm of $\B^{k}\left(\X_1\times \dots \times \X_k, \Y\right)$ is weakly differentiable at $\T$.

    \item[(ii)] $\Omega(\T, \A)$ is a singleton set for any $\A\in \B^{k}\left(\X_1\times \dots \times \X_k, \Y\right)$, where $\Omega(\T,\A)$ is the subset of scalars defined in (\ref{Set Omega}).
\end{itemize}
\end{theorem}

\begin{proof}
Without loss of generality, assume that $\|\T\|=\|\A\|=1.$

\noindent (i)$\implies$(ii): Suppose on contrary that $\Omega(\T,\A)$ is not a singleton set for some $\A\in \B^{k}\left(\X_1\times \dots \times \X_k, \Y\right)$.  Let $\alpha$, $\beta$ be two distinct members of $\Omega(\T,\A).$ Then there exist $((\x_n,y_n^*))\in \M_\T$ and $((\z_n,w_n^*))\in \M_\T$ such that the following hold:
\[ (i)~{y_n^*(\A\x_n)}\to \alpha,~(ii)~{w_n^*(\A\z_n)}\to \beta.\]
Now, there are two possibilities.

\medskip

\noindent Case I: Suppose that $\alpha, \beta \neq 0.$ Let $\mathsf{U}_1, \mathsf{U}_2\in \B^{k}\left(\X_1\times \dots \times \X_k, \Y\right)$ be defined by

\[ \mathsf{U}_1:=\T-\frac{1}{{\alpha}}\A,~\mathsf{U}_2:=\T-\frac{1}{{\beta}}\A.\]
Then we get
\begin{align*}
\lim y_n^*(\mathsf{U}_1\x_n) =\lim y_n^*(\T\x_n)-\frac{1}{\alpha}\lim y_n^*\left(\A \x_n\right)= 0.
\end{align*}

\medskip

\noindent Similarly, $w_n^*(\mathsf{U}_2\z_n)\to 0.$ Thus, $0\in \Omega(\T, \mathsf{U}_1)\cap \Omega(\T, \mathsf{U}_2).$ It now follows from Theorem \ref{Theorem:1} that $\T\perp_B \mathsf{U}_1$ and $\T\perp_B \mathsf{U}_2.$ By the homogeneity of Birkhoff-James orthogonality, we get 
\[\T\perp_B \dfrac{-{\alpha}}{({\beta}-{\alpha})}\mathsf{U}_1 \quad \mathrm{and} \quad \T\perp_B \dfrac{{\beta}}{({\beta}-{\alpha})}\mathsf{U}_2.\]
Since the norm of $\B^{k}\left(\X_1\times \dots \times \X_k, \Y\right)$ is weakly differentiable at $T$, it follows from Theorem \ref{Primary Result} that Birkhoff-James orthogonality is right-additive at $\T$. Thus, we have

\[ \T\perp_B \left(\dfrac{-{\alpha}}{({\beta}-{\alpha})}\mathsf{U}_1+\dfrac{{\beta}}{({\beta}-{\alpha})}\mathsf{U}_2\right).\]

\medskip

This proves that $\T\perp_B \T$, which is a contradiction.

\medskip

\noindent Case II: At least one of $\alpha, \beta$ is zero. Without loss of generality, let $\alpha=0.$ Then $\T\perp_B \A$. Also, by Case I, $\T\perp_B \left( \T-\dfrac{1}{{\beta}}\A\right).$ Therefore, we have

\[ \T\perp_B \left(\frac{1}{{\beta}}\A+\T-\frac{1}{{\beta}}\A\right).\]
Thus, $\T\perp_B \T$ which is again a contradiction.

\medskip

\noindent (ii)$\implies$(i): Consider $\A_1, \A_2 \in \B^{k}\left(\X_1\times \dots \times \X_k, \Y\right)$ with $\T\perp_B \A_1$, $\T\perp_B \A_2$. Therefore, $0\in \C(\Omega(\T,\A_1))\cap \C(\Omega(\T,\A_2))$, by Theorem \ref{Theorem:1}. On the other hand, $\Omega(\T, \A_1)$ and $\Omega(\T,\A_2)$ are singleton sets. This implies $\Omega(\T,\A_1)=\Omega(\T,\A_2)=\{0\}.$
Thus, given any $((\x_n,y_n^*))\in \M_\T$, we can find a common subsequence $(n_j)$ of natural numbers so that $y_{n_j}^*(\A_1 \x_{n_j}) \to 0,~ y_{n_j}^*(\A_2 \x_{n_j}) \to 0,$ as $j\to \infty$. Since $((\x_{n_j},y_{n_j}^*))$ is also a member of $\M_\T$, and $y_{n_j}^*((\A_1+\A_2)\x_{n_j})) \to 0$, we have $0\in \Omega(\T,\A_1+\A_2)\subseteq \C(\Omega(\T,\A_1+\A_2))$. Therefore, by Theorem \ref{Theorem:1}, $\T\perp_B (\A_1+\A_2)$ and consequently, the proof follows from Theorem \ref{Primary Result}.
\end{proof}

As a corollary of Theorem \ref{Theorem:3}, we have the following conclusive result regarding the weak differentiability of the norm of $\B^{k}\left(\X_1\times \dots \times \X_k, \Y\right)$.

\begin{theorem}
Let $\X_i,~\Y$ be normed linear spaces; $1\leq i \leq k,$ and let $\T\in \B^{k}\left(\X_1\times \dots \times \X_k, \Y\right)$ be non-zero. Then the following are equivalent:

\begin{itemize}
    \item[(i)] The norm of $\B^{k}\left(\X_1\times \dots \times \X_k, \Y\right)$ is weakly differentiable at $\T$.

    \medskip
    
    \item[(ii)] For any two members $((\x_n,y_n^*))$ and $((\z_n,w_n^*))$ of $\M_\T$ and for any $\A\in \B^{k}\left(\X_1\times \dots \times \X_k, \Y\right)$, both $\lim y_n^*(\A \x_n)$ and $\lim w_n^*(\A \z_n)$ exist, and are equal.
\end{itemize}
\end{theorem}

\begin{proof}
(i)$\implies$(ii): We first note that

\medskip

\begin{itemize}
    \item $\lim y_n^*(\A \x_n)$ and $\lim w_n^*(\A \z_n)$, whenever exist, are members of $\Omega(\T,\A)$.

\medskip    
    
    \item Every subsequential limit of the sequences $(y_n^*(\A \x_n))$ and $(w_n^*(\A \z_n))$ are also members of $\Omega(\T,\A)$.

\medskip    
    
    \item Since the norm is weakly differentiable at $\T$, $\Omega(\T,\A)=\{\lambda\}$ for some $\lambda\in \F$, by Theorem \ref{Theorem:3}.
\end{itemize}

\medskip

\noindent Clearly, $(y_n^*(\A \x_n))$ is a bounded sequence of scalars, and therefore, has a convergent subsequence. For any convergent subsequence $\left(y_{n_p}^*(\A \x_{n_p})\right)$ of $(y_n^*(\A \x_n))$, we have $\lim y_{n_p}^*(\A \x_{n_p})= \lambda$. Since any convergent subsequence of the bounded sequence $(y_n^*(\A \x_n))$ converges to $\lambda,$ the sequence $(y_n^*(\A \x_n))$ itself converges to $\lambda.$ Similarly, $\lim w_n^*(\A \z_n)=\lambda$.

\medskip

\noindent (ii)$\implies$(i): The stated condition indicates that $\Omega(\T,\A)$ is a singleton set for any $\A\in \B^{k}\left(\X_1\times \dots \times \X_k, \Y\right)$. The rest of the proof follows from Theorem \ref{Theorem:3}.
\end{proof}

\medskip

Theorem \ref{Theorem:3} can also be stated in terms of semi-inner-products. We illustrate this in our next result. Let us recall the definition of semi-inner-product \cite{Giles, Lumer, Sain3} in this connection.

\begin{definition}
Let $\X$ be a vector space over $\F.$ A semi-inner-product (SIP) on $\X$ is a function $[\cdot, \cdot]:\X \times \X\to \F$ that satisfies the following conditions:
\begin{itemize}
    \item $[x,x]>0$ for all non-zero $x\in \X.$
    
    \medskip

    \item $[\alpha y+\beta z,x]=\alpha [y,x]+\beta [z,x]$ for all $x,y,z\in \X,$ and $\alpha, \beta \in \F.$
    
    \medskip

    \item $[y,\lambda x]=\overline{\lambda}[y,x]$ for all $x,y\in \X$ and $\lambda\in \F.$
    
    \medskip
    
    \item $|[y,x]|^2\leq \|y\|^2\|x\|^2$ for all $x,y\in \X.$
\end{itemize}
\end{definition}

\medskip

The vector space $\X$, equipped with an SIP, is called an SIP space. Every SIP space is a normed linear space with the norm $\|x\|=[x,x]^{\frac{1}{2}}.$ On the other hand, every normed linear space can be made into an SIP space, in general, in infinitely many different ways.

\begin{Lemma}\label{Lemma:2}
Let $\X_i,~\Y$ be normed linear spaces for $1\leq i \leq k,$ and let $\T,~\A\in \B^{k}\left(\X_1\times \dots \times \X_k, \Y\right)$ be non-zero. Let $\mathcal{L}$ denote the collection of all SIPs on $\Y.$ Let $\Lambda(\T,\A)$ be the (non-empty) collection of scalars defined by
\begin{align}\label{Set:Lambda}
\Lambda(\T,\A):=\left\{\lambda\in \F:[\A\x_n,\T\x_n]_n\to \lambda,([\cdot,\cdot]_n)\subseteq \mathcal{L},~(\x_n)\subseteq S_{\X_1}\times \dots \times S_{\X_k},~\|\T\x_n\|\to \|\T\|\right\}.
\end{align}
Then for any $\lambda\in \Lambda(\T,\A)$, $\dfrac{\lambda}{\|\T\|}$ is a member of $\Omega(\T,\A),$ where $\Omega(\T,\A)$ is the subset of scalars defined by (\ref{Set Omega}).
\end{Lemma}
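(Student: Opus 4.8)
The plan is to connect the semi-inner-product data on $\Y$ with the support-functional data defining $\Omega(\T,\A)$. The key fact I would exploit is the standard duality between SIPs and support functionals: given an SIP $[\cdot,\cdot]$ on $\Y$ and a non-zero $y\in\Y$, the map $z\mapsto [z,y]/\|y\|$ is a norm-one linear functional that attains its norm at $y$, i.e. it lies in $J(y)$. This is immediate from the linearity in the first slot, the homogeneity property $[y,\lambda x]=\overline\lambda[y,x]$, the Cauchy--Schwarz-type inequality $|[z,y]|\le\|z\|\,\|y\|$, and $[y,y]=\|y\|^2$.

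First I would take $\lambda\in\Lambda(\T,\A)$ and unpack the definition: there is a sequence $(\x_n)\subseteq S_{\X_1}\times\dots\times S_{\X_k}$ with $\|\T\x_n\|\to\|\T\|$ and a sequence of SIPs $([\cdot,\cdot]_n)\subseteq\mathcal L$ with $[\A\x_n,\T\x_n]_n\to\lambda$. Passing to a subsequence if necessary I may assume $\T\x_n\neq\theta$ for all $n$ (this is fine since $\|\T\x_n\|\to\|\T\|>0$). For each $n$, define $y_n^*\in\Y^*$ by $y_n^*(z)=[z,\T\x_n]_n/\|\T\x_n\|$. By the remarks above, $y_n^*\in S_{\Y^*}$ and $y_n^*(\T\x_n)=\|\T\x_n\|\to\|\T\|$, so $((\x_n,y_n^*))\in\M_\T$. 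Moreover
\[
y_n^*(\A\x_n)=\frac{[\A\x_n,\T\x_n]_n}{\|\T\x_n\|}\longrightarrow\frac{\lambda}{\|\T\|},
\]
since the numerator converges to $\lambda$ and the denominator to $\|\T\|$. Hence $\lambda/\|\T\|\in\Omega(\T,\A)$ by the very definition \eqref{Set Omega}.

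The only real subtlety is the non-emptiness assertion parenthetically claimed for $\Lambda(\T,\A)$, and the degenerate possibility that $\T\x_n=\theta$ for infinitely many $n$; both are handled easily. For non-emptiness, fix any SIP $[\cdot,\cdot]_0$ on $\Y$ (one exists because every normed space admits an SIP), take a norming sequence $(\x_n)$ for $\T$, and use $([\cdot,\cdot]_0)$ as the constant sequence of SIPs; the resulting scalars $[\A\x_n,\T\x_n]_0$ are bounded by $\|\A\|\,\|\T\|$, so a subsequence converges, exhibiting a member of $\Lambda(\T,\A)$. For the degenerate case, since $\|\T\x_n\|\to\|\T\|>0$ only finitely many $\x_n$ can satisfy $\T\x_n=\theta$, so discarding them changes neither the limit $\|\T\x_n\|\to\|\T\|$ nor the limit $[\A\x_n,\T\x_n]_n\to\lambda$. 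I do not anticipate any genuine obstacle here: the whole argument is a single application of the SIP--support-functional correspondence followed by a limit computation, and the hardest part is merely being careful that one may normalize by $\|\T\x_n\|$ for large $n$.
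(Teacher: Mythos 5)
Your proposal is correct and follows essentially the same route as the paper: both arguments hinge on the SIP--support-functional correspondence $z\mapsto [z,\T\x_n]_n/\|\T\x_n\|$, discard the finitely many indices where $\T\x_n=\theta$, and conclude by the limit computation $y_n^*(\A\x_n)\to\lambda/\|\T\|$. The extra remarks on the non-emptiness of $\Lambda(\T,\A)$ are a harmless bonus not present in the paper's proof.
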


\begin{proof}
Let $\lambda\in \Lambda(\T,\A)$. Then, there exist a sequence $(\x_n)\subseteq S_{\X_1}\times \dots \times S_{\X_k}$ with $\|\T\x_n\|\to \|\T\|$ and a sequence of SIPs $([\cdot, \cdot]_n)\subseteq \mathcal{L}$ such that $[\A\x_n,\T\x_n]_n\to \lambda.$ Choose $N\in \mathbb{N}$ be such that $\|\T\x_n\|>0$ for all $n> N.$ Let $\z_m=\x_{_{N+m}}$ for all $m\in \mathbb{N}.$ For each $m\in \mathbb{N}$, define $y_m^*:\Y\to \F$ by
\[ y_m^*(y)=\frac{1}{\left\|\T\z_m\right\|}\left[y,\T\z_m\right]_{N+m},\qquad \mathrm{for~all}~y\in \Y.\]
Then $y_m^*\in S_{\Y^*}$ and $y_m^*(\T\z_m)\to \|\T\|$. Consequently, $(\z_m,y_m^*)\in \M_\T.$ It is now obvious that $\lim y_m^*(\A\z_m)=\dfrac{\lambda}{\|\T\|}.$ Thus, $\dfrac{\lambda}{\|\T\|}\in \Omega(\T,\A)$ and the proof is complete.
\end{proof}

Using Lemma \ref{Lemma:2}, we now reformulate Theorem \ref{Theorem:3} in terms of SIP.

\begin{theorem}\label{Theorem:4}
Let $\X_i,~\Y$ be normed linear spaces for $1\leq i \leq k,$ and let $\T\in \B^{k}\left(\X_1\times \dots \times \X_k, \Y\right)$ be non-zero. Let $\mathcal{L}$ denote the collection of all SIPs on $\Y.$ Then the following are equivalent.

\begin{itemize}
    \item[(i)] The norm of $\B^{k}\left(\X_1\times \dots \times \X_k, \Y\right)$ is weakly differentiable at $\T$.

    \medskip
    
    \item[(ii)] $\Omega(\T, \A)$ is a singleton set for any $\A\in \B^{k}\left(\X_1\times \dots \times \X_k, \Y\right)$, where $\Omega(\T,\A)$ is the subset of scalars defined by (\ref{Set Omega}).
    
    \medskip
    
    \item[(iii)] If $\T\perp_B \A$ for some $\A\in \B^{k}\left(\X_1\times \dots \times \X_k, \Y\right),$ then $[\A\x_n,\T\x_n]_n\to 0$ for any sequence $(\x_n)\subseteq S_{\X_1}\times \dots \times S_{\X_k}$ with $\|\T\x_n\|\to \|\T\|$ and for any sequence of SIPs $([\cdot,\cdot]_n)\subseteq \mathcal{L}$.
\end{itemize}
\end{theorem}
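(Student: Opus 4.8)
The plan is to establish the cycle of implications $(i)\Longleftrightarrow(ii)$ and $(ii)\Longleftrightarrow(iii)$, reusing the equivalence $(i)\Longleftrightarrow(ii)$ that is already contained in Theorem \ref{Theorem:3}. So the only genuine work is to link condition $(iii)$ to the other two, and here Lemma \ref{Lemma:2} does most of the heavy lifting by relating limits of the form $[\A\x_n,\T\x_n]_n/\|\T\|$ to membership in $\Omega(\T,\A)$.

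For $(ii)\Longrightarrow(iii)$: assume $\Omega(\T,\A)$ is singleton for every $\A$, and suppose $\T\perp_B\A$ for some particular $\A$. By Theorem \ref{Theorem:1} (via Theorem \ref{Theorem:3}'s proof, or directly) we have $0\in\C(\Omega(\T,\A))$; since $\Omega(\T,\A)$ is a singleton and its convex hull contains $0$, it must equal $\{0\}$. Now take any $(\x_n)\subseteq S_{\X_1}\times\dots\times S_{\X_k}$ with $\|\T\x_n\|\to\|\T\|$ and any sequence of SIPs $([\cdot,\cdot]_n)\subseteq\mathcal{L}$. The scalar sequence $([\A\x_n,\T\x_n]_n)$ is bounded (by the Cauchy--Schwarz axiom of an SIP, $|[\A\x_n,\T\x_n]_n|\leq\|\A\x_n\|\,\|\T\x_n\|\leq\|\A\|\,\|\T\|$), so every subsequence has a convergent sub-subsequence; the limit of any such convergent sub-subsequence lies in $\Lambda(\T,\A)$, hence by Lemma \ref{Lemma:2} the limit divided by $\|\T\|$ lies in $\Omega(\T,\A)=\{0\}$, so the limit is $0$. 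Since every convergent subsequence of the bounded sequence $([\A\x_n,\T\x_n]_n)$ tends to $0$, the full sequence tends to $0$. This is exactly $(iii)$.

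For $(iii)\Longrightarrow(ii)$: assume $(iii)$ and suppose toward a contradiction that $\Omega(\T,\A)$ is not a singleton for some $\A$; pick distinct $\alpha,\beta\in\Omega(\T,\A)$. The idea is to replicate the Case~I/Case~II device from the proof of Theorem \ref{Theorem:3}: by forming the auxiliary maps $\mathsf{U}_1=\T-\tfrac{1}{\alpha}\A$ (and $\mathsf{U}_2=\T-\tfrac{1}{\beta}\A$) when $\alpha,\beta\neq0$, or using $\A$ directly when one of them vanishes, one produces a map $\mathsf{U}\in\B^k$ with $\T\perp_B\mathsf{U}$ but $0$ \emph{not} forced to be the only subsequential limit of $([\mathsf{U}\x_n,\T\x_n]_n)$ along an appropriate norming sequence. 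Concretely, $\alpha\in\Omega(\T,\A)$ supplies $((\x_n,y_n^*))\in\M_\T$ with $y_n^*(\A\x_n)\to\alpha$; choosing the SIP on $\Y$ that has $y_n^*(\cdot)=[\,\cdot\,,\T\x_n]/\|\T\x_n\|$ as its associated functional (an SIP realizing a prescribed support functional always exists, by the standard Giles/Lumer construction), one gets a sequence of SIPs along which $[\A\x_n,\T\x_n]_n\to\alpha\|\T\|$. If $\alpha\neq0$ this already contradicts $(iii)$ applied to the map $\A$ provided $\T\perp_B\A$; in general one applies $(iii)$ to the suitable $\mathsf{U}_j$ with $\T\perp_B\mathsf{U}_j$ and reads off a nonzero limit $\|\T\|\bigl(1-\tfrac{\alpha}{\beta}\bigr)$ or similar, the contradiction. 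Then $(ii)\Longleftrightarrow(i)$ is precisely Theorem \ref{Theorem:3}, closing the loop.

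The main obstacle I anticipate is the careful bookkeeping in $(iii)\Longrightarrow(ii)$: one must verify that there genuinely exists a sequence of SIPs on $\Y$ whose $n$-th associated duality functional is $y_n^*$ — this rests on the basic fact that for every $y\in S_\Y$ and every $y^*\in J(y)$ there is an SIP $[\cdot,\cdot]$ with $[z,y]=y^*(z)$ for all $z$ (extend $y^*$ to a homogeneous selection of support functionals and set $[z,w]=\|w\|\,f_w(z)$) — and then that the resulting sequence $([\A\x_n,\T\x_n]_n)$ (or $([\mathsf{U}_j\x_n,\T\x_n]_n)$) really fails to converge to $0$, contradicting $(iii)$. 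Handling the case where $\alpha$ or $\beta$ equals $0$ separately, exactly as in Theorem \ref{Theorem:3}, keeps this clean. Everything else is either a direct citation of Lemma \ref{Lemma:2} and Theorem \ref{Theorem:3} or the routine subsequence-limit argument already sketched above.
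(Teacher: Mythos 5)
Your treatment of (i)$\iff$(ii) and of (ii)$\implies$(iii) is correct and coincides with the paper's: reduce to $\Omega(\T,\A)=\{0\}$, bound $([\A\x_n,\T\x_n]_n)$ by the Cauchy--Schwarz axiom of an SIP, and use Lemma \ref{Lemma:2} to kill every subsequential limit. The difficulty is in how you close the cycle. Note first that the paper does \emph{not} prove (iii)$\implies$(ii); it proves (iii)$\implies$(i) directly and painlessly: if $\T\perp_B\A_1$ and $\T\perp_B\A_2$, then by additivity of the SIP in the first argument $[\A_1\x_n+\A_2\x_n,\T\x_n]_n\to0$, and the inequality $\|\T\x_n+\lambda(\A_1+\A_2)\x_n\|\,\|\T\x_n\|\geq\bigl|[\T\x_n,\T\x_n]_n+\lambda[\A_1\x_n+\A_2\x_n,\T\x_n]_n\bigr|\to\|\T\|^2$ yields $\T\perp_B(\A_1+\A_2)$, i.e.\ smoothness. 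No contradiction argument and no bespoke SIPs are required.

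Your proposed (iii)$\implies$(ii), as written, has a genuine gap at the step where you ``choose the SIP on $\Y$ that has $y_n^*(\cdot)=[\,\cdot\,,\T\x_n]/\|\T\x_n\|$ as its associated functional.'' The Giles--Lumer construction realizes a prescribed functional as the duality map at a point only when that functional is an \emph{exact} support functional there; any SIP must satisfy $[\T\x_n,\T\x_n]_n=\|\T\x_n\|^2$, which forces the associated functional at $\T\x_n$ to lie in $J(\T\x_n)$. But the functionals $y_n^*$ occurring in a member of $\M_\T$ are only asymptotically norming ($y_n^*(\T\x_n)\to\|\T\|$) and in general $y_n^*\notin J(\T\x_n)$, so the sequence of SIPs with $[\A\x_n,\T\x_n]_n\to\alpha\|\T\|$ that your argument needs may not exist. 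The strategy is salvageable by dropping that step entirely: since $\T\perp_B\mathsf{U}_1$ and $\T\perp_B\mathsf{U}_2$, apply (iii) to both along the \emph{same} norming sequence $(\x_n)$ with an \emph{arbitrary} sequence of SIPs; then $[\mathsf{U}_1\x_n,\T\x_n]_n=\|\T\x_n\|^2-\tfrac{1}{\alpha}[\A\x_n,\T\x_n]_n\to0$ forces $[\A\x_n,\T\x_n]_n\to\alpha$, while the analogous computation for $\mathsf{U}_2$ forces the same sequence to converge to $\beta$, contradicting $\alpha\neq\beta$ (with the obvious modification when $\alpha$ or $\beta$ vanishes). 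As submitted, however, this direction does not go through, and the paper's direct proof of (iii)$\implies$(i) is both shorter and avoids the issue.
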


\begin{proof}
We only prove (ii)$\implies$(iii) and (iii)$\implies$(i), as (i)$\iff$(ii) follows from Theorem \ref{Theorem:3}.

\medskip

\noindent (ii)$\implies$(iii): Since $\T\perp_B \A$ and $\Omega(\T,\A)$ is a singleton set, we already know from Theorem \ref{Theorem:3} that $\Omega(\T,\A)=\{0\}.$ Let $(\x_n)\subseteq S_{\X_1}\times \dots \times S_{\X_k}$ be any sequence with $\|\T\x_n\|\to \|\T\|$ and let $([\cdot,\cdot]_n)\subseteq \mathcal{L}$ be any sequence of SIPs. Consider any convergent subsequence $\left(\left[\A\x_{n_p},\T\x_{n_p}\right]_{n_p}\right)$ of the bounded sequence $([\A\x_n,\T\x_n]_n)$, and let $\left[\A\x_{n_p},\T\x_{n_p}\right]_{n_p}\to \lambda.$ Now, by Lemma \ref{Lemma:2}, we have $\dfrac{\lambda}{\|\T\|}\in \Omega(\T,\A)$. Thus, we have $\lambda=0.$ Since any convergent subsequence of $([\A\x_n,\T\x_n]_n)$ converges to $0$, the sequence $([\A\x_n,\T\x_n]_n)$ converges to $0.$

\medskip

\noindent (iii)$\implies$(i): Let $\A_1,\A_2\in \B^{k}\left(\X_1\times \dots \times \X_k, \Y\right)$ with $\T\perp_B \A_1$, $\T\perp_B \A_2.$ Let $(\x_n)\subseteq S_{\X_1}\times \dots \times S_{\X_k}$ with $\|\T\x_n\|\to \|\T\|$ and let $([\cdot,\cdot]_n)\subseteq \mathcal{L}$ be any sequence of SIPs. Note that $[\A_1\x_n+\A_2\x_n,\T\x_n]_n=[\A_1\x_n,\T\x_n]_n+[\A_2\x_n,\T\x_n]_n$, for all $n\in \mathbb{N}.$ It now follows from the hypothesis that $[\A_1\x_n+\A_2\x_n,\T\x_n]_n\to 0.$ Moreover, for any scalar $\lambda$, we have
\begin{align*}
\|\T+\lambda(\A_1+\A_2)\|\|\T\|&\geq \|\T\x_n+\lambda (\A_1\x_n+\A_2\x_n)\|\|\T\x_n\|\\
&\geq \left|[\T\x_n+\lambda\A_1\x_n+\lambda \A_2\x_n,\T\x_n]_n\right|\\
& = \left|[\T\x_n,\T\x_n]_n+\lambda[\A_1\x_n+\A_2\x_n,\T\x_n]_n\right|\\
& \to \|\T\|^2,~\mathrm{as}~n\to \infty.
\end{align*}
In other words, $\T\perp_B (\A_1+\A_2)$. Consequently, the proof follows from Theorem \ref{Primary Result}.
\end{proof}

\begin{remark}\label{Remark:5}
For $k=1$, the above theorem reduces to the characterization of the weak differentiability of norm on the spaces of bounded linear operators. Note that Theorem \ref{Theorem:4} improves Theorem 2.1 in \cite{SPMR}.
\end{remark}

\medskip

We now turn our attention towards the finite-dimensional case. An additional advantage in this case is that $M_\T\neq \emptyset$ here. Before that let us state a result which is an obvious corollary of Theorem \ref{Theorem:3} and a proof of this follows from Remark \ref{Remark:2}.

\medskip

\begin{cor}\label{Corollary:4}
Let $\X_i,~\Y$ be finite-dimensional Banach spaces for $1\leq i \leq k,$ and let $\T\in \B^{k}\left(\X_1\times \dots \times \X_k, \Y\right)$ be non-zero. Then the following are equivalent:

\begin{itemize}
   \item[(i)] The norm of $\B^{k}\left(\X_1\times \dots \times \X_k, \Y\right)$ is weakly differentiable at $\T$.

\medskip
    
    \item[(ii)] $\Omega'(\T,\A)$ is a singleton set for any $\A\in \B^{k}\left(\X_1\times \dots \times \X_k, \Y\right)$, where $\Omega'(\T,\A)$ is the subset of scalars defined by (\ref{Set Omega prime}).
\end{itemize}
\end{cor}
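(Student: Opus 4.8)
The plan is to deduce Corollary \ref{Corollary:4} by combining Theorem \ref{Theorem:3} with the identification $\Omega(\T,\A)=\Omega'(\T,\A)$ established in Remark \ref{Remark:2}. First I would observe that since each $\X_i$ and $\Y$ are finite-dimensional, Remark \ref{Remark:2} applies verbatim: for \emph{every} $\A\in \B^{k}\left(\X_1\times \dots \times \X_k, \Y\right)$ one has the set equality $\Omega(\T,\A)=\Omega'(\T,\A)$, where $\Omega'(\T,\A)$ is the collection defined in (\ref{Set Omega prime}) using the genuinely attained norm-attainment set $M_\T$ (non-empty by the standard compactness argument) and the support functionals $J(\T\x)$.

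Given this, the equivalence is almost immediate. For (i)$\implies$(ii): if $\T$ is smooth, then by Theorem \ref{Theorem:3} the set $\Omega(\T,\A)$ is a singleton for every $\A$; since $\Omega'(\T,\A)=\Omega(\T,\A)$, the set $\Omega'(\T,\A)$ is a singleton as well. For (ii)$\implies$(i): conversely, if $\Omega'(\T,\A)$ is a singleton for every $\A$, then again by the identification $\Omega(\T,\A)$ is a singleton for every $\A$, so Theorem \ref{Theorem:3} forces $\T$ to be smooth. There is essentially nothing further to check, which is exactly why the corollary can be stated as ``an obvious application.''

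The only point that warrants a line of care — and the place I would expect a careful reader to pause — is making sure that Remark \ref{Remark:2} indeed gives the set equality for \emph{arbitrary} $\A$, not merely for a fixed one; but this is clear from the remark's proof, which never uses any special property of $\A$ beyond boundedness. One could also note that in the finite-dimensional setting $\Omega'(\T,\A)$ is automatically a closed (indeed compact) set, consistent with Lemma \ref{Lemma:1}, though this observation is not needed for the argument. Since both implications reduce to quoting Theorem \ref{Theorem:3} through the dictionary $\Omega\leftrightarrow\Omega'$, I would simply omit the proof, as the authors do.
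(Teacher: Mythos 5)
Your proposal is correct and is exactly the argument the paper intends: the corollary is deduced by combining the identification $\Omega(\T,\A)=\Omega'(\T,\A)$ from Remark \ref{Remark:2} (valid for arbitrary $\A$ in the finite-dimensional setting) with the characterization of smoothness in Theorem \ref{Theorem:3}. The paper omits the proof for precisely this reason, so nothing further is needed.
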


\medskip

It turns out that the norm attainment set of a $k$-linear map $\T$ plays a decisive role in determining the smoothness of $\T.$ This is illustrated in the next theorem.

\begin{theorem}\label{Theorem:5}
Let $\X_i,~\Y$ be finite-dimensional Banach spaces; $1\leq i \leq k,$ and let $\T\in \B^{k}\left(\X_1\times \dots \times \X_k, \Y\right)$ be non-zero. Then the following are equivalent:

\begin{itemize}
   \item[(i)] The norm of $\B^{k}\left(\X_1\times \dots \times \X_k, \Y\right)$ is weakly differentiable at $\T$.  
   
   \medskip

    \item[(ii)] $M_\T:=\left\{ \left(\mu^{(1)} x_1,\dots, \mu^{(k)} x_k\right)\in S_{\X_1}\times \dots \times S_{\X_k}:~\left(\mu^{(1)},    \dots, \mu^{(k)}\right)\in \mathbb{S}^k\right\}$ and the norm of $\Y$ is weakly differentiable at $\T\x_0$, where $\x_0=(x_1, \dots,x_k).$
    
    \medskip
    
    \item[(iii)] $\Omega'(\T,\A)$ is a singleton set for any $\A\in \B^{k}\left(\X_1\times \dots \times \X_k, \Y\right)$, where $\Omega'(\T,\A)$ is the subset of scalars defined by (\ref{Set Omega prime}). To be more precise, $\Omega'(\T,\A)=\{y_0^*(\A \x_0)\}$, where $y_0^*\in J(\T \x_0)$.
\end{itemize}
\end{theorem}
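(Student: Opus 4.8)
The plan is to prove the chain (i)$\Rightarrow$(ii)$\Rightarrow$(iii)$\Rightarrow$(i), using Corollary \ref{Corollary:4} to handle the passage back to smoothness. The overall strategy is to extract from the (now non-empty, by compactness) norm attainment set $M_\T$ enough structural information to pin down both its shape and the smoothness of the single image point $\T\x_0$.

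\textbf{Step 1: (i)$\Rightarrow$(ii).} Assume $\T$ is smooth. First I would show $M_\T$ has the claimed form. Suppose $\x_0=(x_1,\dots,x_k)$ and $\z_0=(z_1,\dots,z_k)$ both lie in $M_\T$, and suppose they are not related by a scalar tuple from $\mathbb{S}^k$, i.e.\ $z_i\neq \mu^{(i)}x_i$ for any choice of unimodular scalars. The idea is to build $\A$ witnessing a violation of smoothness via Corollary \ref{Corollary:4}: I want $\Omega'(\T,\A)$ non-singleton. By Hahn–Banach pick $y_0^*\in J(\T\x_0)$ and $w_0^*\in J(\T\z_0)$. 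If I can choose a bounded $k$-linear $\A$ with prescribed values $\A\x_0$ and $\A\z_0$ so that $y_0^*(\A\x_0)\neq w_0^*(\A\z_0)$, then $\{y_0^*(\A\x_0),\,w_0^*(\A\z_0)\}\subseteq \Omega'(\T,\A)$ and $\Omega'(\T,\A)$ is not singleton, contradicting Corollary \ref{Corollary:4}. Constructing such an $\A$ is the \emph{main obstacle}: because $\X_1\times\dots\times\X_k$ is not a linear space and the components $x_i$ and $z_i$ may be entangled, prescribing two independent ``values'' of a multilinear map at two points requires that the points be genuinely independent in the appropriate sense. The resolution is precisely that $\x_0,\z_0$ \emph{not} being related by an $\mathbb{S}^k$-tuple forces, in at least one coordinate $i$, that $x_i$ and $z_i$ are linearly independent in $\X_i$; then one takes linear functionals $f_i\in\X_i^*$ with $f_i(x_i)=1$, $f_i(z_i)=0$ (and similar separation in the other coordinates, plus a suitable fixed vector in $\Y$) and defines $\A\x=\big(\prod_i f_i(\cdot)\big)\,v$, tuning $v$ and the functionals to make the two scalars differ. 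A short verification (using multilinearity and the normalization $|\mu^{(i)}|=1$) shows $M_\T$ is invariant under the $\mathbb{S}^k$-action, so $M_\T$ equals the orbit of a single $\x_0$. Second, for smoothness of $\T\x_0$ in $\Y$: if $\T\x_0$ admitted two distinct support functionals $y_0^*, w_0^*\in J(\T\x_0)$, take $y^*\in S_{\Y^*}$ with $y^*(\T\x_0-\text{(something)})$... more simply, pick $y\in\Y$ with $y_0^*(y)\neq w_0^*(y)$, set $\A\x=\big(\prod_i f_i(\cdot)\big)y$ with $f_i(x_i)=1$, and note $y_0^*(\A\x_0)=y_0^*(y)\neq w_0^*(y)=w_0^*(\A\x_0)$ while both are in $\Omega'(\T,\A)$ (since $\x_0\in M_\T$ and both functionals are in $J(\T\x_0)$), again contradicting Corollary \ref{Corollary:4}.

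\textbf{Step 2: (ii)$\Rightarrow$(iii).} Assume $M_\T$ is the $\mathbb{S}^k$-orbit of $\x_0$ and $\T\x_0$ is smooth with unique support functional $y_0^*$. Fix $\A$. Any element of $\Omega'(\T,\A)$ has the form $w^*(\A\z)$ with $\z\in M_\T$ and $w^*\in J(\T\z)$. Write $\z=(\mu^{(1)}x_1,\dots,\mu^{(k)}x_k)$; by multilinearity $\T\z=\big(\prod_i\mu^{(i)}\big)\T\x_0$, so $\z$ has the same image up to a unimodular scalar $c=\prod_i\mu^{(i)}$, hence $J(\T\z)=\{\bar c\,y_0^*\}$ is a singleton by smoothness of $\T\x_0$ (the unique support functional transforms by the conjugate scalar). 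Then $w^*(\A\z)=\bar c\,y_0^*(\A\z)=\bar c\,y_0^*\big(c\,\A\x_0\big)=|c|^2 y_0^*(\A\x_0)=y_0^*(\A\x_0)$, using $\A\z=c\,\A\x_0$ and $|c|=1$. So every element of $\Omega'(\T,\A)$ equals $y_0^*(\A\x_0)$, giving $\Omega'(\T,\A)=\{y_0^*(\A\x_0)\}$, a singleton, which is exactly (iii).

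\textbf{Step 3: (iii)$\Rightarrow$(i).} This is immediate from Corollary \ref{Corollary:4}: if $\Omega'(\T,\A)$ is singleton for every $\A$, then $\T$ is smooth. Thus the cycle closes. I expect Steps 2 and 3 to be essentially routine bookkeeping with multilinearity; the real work — and the only genuinely delicate point — is the construction in Step 1 of the multilinear ``separating'' map $\A$, which hinges on the observation that failure of the $\mathbb{S}^k$-relation between two points of $M_\T$ produces linear independence in some coordinate, letting us apply Hahn–Banach coordinatewise.
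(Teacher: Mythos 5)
Your proposal is correct, and Steps 2 and 3 coincide with the paper's argument ((ii)$\Rightarrow$(iii) via the transformation rule $J(\mu\,\T\x_0)=\{\overline{\mu}\,y_0^*\}$ for unimodular $\mu$, and (iii)$\Rightarrow$(i) straight from Corollary \ref{Corollary:4}). Where you genuinely diverge is in (i)$\Rightarrow$(ii). The paper never touches Corollary \ref{Corollary:4} there: for the shape of $M_\T$ it extends $\{x_j,z_j\}$ to bases of each $\X_i$, writes $\T=\A_1+\A_2$ with $\A_1\x_0=\theta$ and $\A_2\mathbf{v}_0=\theta$, deduces $\T\perp_B\A_1$ and $\T\perp_B\A_2$, and contradicts right-additivity of $\perp_B$ at the smooth point $\T$ (getting $\T\perp_B\T$); for smoothness of $\T\x_0$ it exhibits two distinct support functionals $\mathsf{U}\mapsto y_i^*(\mathsf{U}\x_0)$ at $\T$ itself. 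You instead handle both halves uniformly by producing a single rank-one multilinear map $\A=\bigl(\prod_i f_i\bigr)\otimes v$ for which $\Omega'(\T,\A)$ visibly contains two points, and then cite Corollary \ref{Corollary:4}. Your route is arguably cleaner: it avoids the basis extension, the construction of $\A_1,\A_2$ on basis tuples, and the (unstated in the paper) verification that $\A_1+\A_2=\T$ and that each $\A_i$ is Birkhoff--James orthogonal to $\T$; the key observation you both rely on --- that failure of the $\mathbb{S}^k$-relation forces linear independence of $x_j,z_j$ in some coordinate (using $\|x_j\|=\|z_j\|=1$) --- is identical. Your sketch of the separating map is slightly loose (``tuning $v$ and the functionals''), but it completes routinely: take $f_j(x_j)=1$, $f_j(z_j)=0$ in the independent coordinate, $f_i(x_i)=1$ elsewhere, and $v=\T\x_0$, so that $y_0^*(\A\x_0)=\|\T\|\neq 0=w_0^*(\A\z_0)$. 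You also correctly note the converse inclusion (orbit $\subseteq M_\T$), which the paper leaves implicit.
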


\begin{proof}
We only prove (i)$\implies$(ii) and (ii)$\implies$(iii), as (iii)$\iff$(i) follows from Corollary \ref{Corollary:4}.

\medskip

\noindent (i)$\implies$(ii): Suppose on the contrary that $M_\T$ is not of the stated form. Then there exists $\z_0\in M_\T$ such that $\z_0=(z_1, \dots, z_k)\neq \left(\mu^{(1)} x_1,\dots, \mu^{(k)} x_k\right)$ for any $\left(\mu^{(1)}, \dots, \mu^{(k)}\right)\in \mathbb{S}^k.$ Therefore, there exists some $j\in \{1,\dots,k\}$  such that $z_j\neq \mu x_j$ for any unimodular scalar $\mu.$ In particular, $\{x_j,z_j\}$ is a linearly independent subset of $\X_j$. For all such indices $j\in \{1,2, \dots, k\}$ such that $\{x_j, z_j\}$ is linearly independent, we extend $\{x_j,z_j\}$ to a basis of $\X_j$. Also, for all such indices $m\in \{1,2, \dots, k\}$ such that $\{x_m, z_m\}$ is linearly dependent, we extend $\{x_m\}$ to a basis of $\X_m$. In this way, we get a basis, say $B_i$, of $\X_i$ for each $1\leq i \leq k$. Let
\[\mathbf{v}_0:=(v_1, \dots, v_k)~\mathrm{where}~
v_j=\begin{cases}
z_j,~&\mathrm{if}~\{x_j,z_j\}~ \mathrm{~is~linearly~independent},\\
x_j,~& \mathrm{otherwise}.
\end{cases}
\]

\noindent It can be shown with a little computation that $\x_0,\mathbf{v}_0\in M_\T\cap \left(B_1\times \dots \times B_k\right).$ Note that any (bounded) $k$-linear map $\A:\X_1\times \dots \times \X_k \to \Y$ is determined by its action on $B_1\times \dots \times B_k.$ Now, we construct two  $k$-linear maps  $\A_1$ and $\A_2$ in the following way.

\noindent For any $\x\in B_1\times \dots \times B_k$,

\medskip

$\A_1 \x=\begin{cases}
\theta, &~\mathrm{if}~\x=\x_0,\\
\T \mathbf{v}_0,&~\mathrm{if}~\x=\mathbf{v}_0,\\
\dfrac{1}{2}\T \x,&~\mathrm{otherwise}.
\end{cases}$\qquad 
$\A_2 \x=\begin{cases}
\T \x_0, &~\mathrm{if}~\x=\x_0,\\
\theta,&~\mathrm{if}~\x=\mathbf{v}_0,\\
\dfrac{1}{2}\T \x,&~\mathrm{otherwise}.
\end{cases}$

\medskip

\noindent Evidently $\T\perp_B \A_1$ and $\T \perp_B \A_2.$ Also, it follows from Theorem \ref{Primary Result} that Birkhoff-James orthogonality is right-additive at $\T$. Therefore, we have $\T\perp_B (\A_1+\A_2)$ and this is a contradiction as $(\A_1+\A_2)=\T.$

\medskip

Next, suppose that the norm of $\Y$ is not weakly differentiable at $\T\x_0$, i.e., $\T \x_0$ is not a smooth point. Then, we can find two distinct support functionals $y_1^*,~y_2^*$ at $\T\x_0.$ Let $\phi, \psi:\B^{k}\left(\X_1\times \dots \times \X_k, \Y\right) \to \mathbb{F}$ be two linear functionals defined by
\[ \phi(\mathsf{U})=y_1^*(\mathsf{U} \x_0),~\psi(\mathsf{U})=y_2^*(\mathsf{U}\x_0), \qquad \mathrm{for~all}~\mathsf{U}\in \B^{k}\left(\X_1\times \dots \times \X_k, \Y\right).\]
Since $\ker y_1^* \neq \ker y_2^*$, it is straightforward that $\phi$ and $\psi$ are distinct support functionals at $\T$. Thus, $\T$ is not smooth and this is a contradiction to Theorem \ref{Primary Result}.

\medskip

\noindent (ii)$\implies$(iii): Let $\A\in \B^{k}\left(\X_1\times \dots \times \X_k, \Y\right)$ be arbitrary. Consider any $\x\in M_\T$. Then $\x=(\mu_1 x_1,\dots,\mu_k x_k)$ for some $(\mu_1,\dots,\mu_k)\in \mathbb{S}^k$. Since $\mathbb{S}$ is a group under multiplication and $\T,\A$ are $k$-linear, we get
\[\T\x=\mu\T\x_0,~\A\x=\mu\A\x_0, \qquad \mathrm{for~some~unimodular~scalar}~\mu.\]
Also, since norm of $\Y$ is weakly differentiable at $\T\x_0$, $\T \x_0$ is smooth. Consequently, $\T \x$ is smooth. Therefore, the unique support functional $y^*$ at $\T\x$ is of the form $\overline{\mu} y_0^*$, where $y_0^*$ is the unique support functional at $\T \x_0$. Consequently, we have
\[{y^*(\A \x)} = {\overline{\mu}y_0^*(\mu\A \x_0)}=y_0^*(\A\x_0).\]
\noindent To be more precise, we have
\[\Omega'(\T,\A)=\{y_0^*(\A \x_0)\},\]
where $\x_0\in M_\T$ and $y_0^*\in J(\T\x_0)$. In other words, $\Omega'(\T,\A)$ is a singleton set.
\end{proof}

\begin{remark}\label{Remark:6}
For $k=2$, the above theorem reduces to the characterization of weak differentiability of the norm on the spaces of bounded bilinear operators. In particular, it simplifies and extends Theorem 2.5 in \cite{Sain2}.
\end{remark}

\medskip

A bounded linear operator $\T\in \B^{1}(\X,\Y)$ is said to have the \emph{Bhatia-\v{S}emrl property} \cite{Choi, SPH}, if given any $\A\in \B^{1}(\X,\Y)$ with $\T\perp_B \A$ implies that there exists $x_0\in S_{\X}$ such that $\|\T x_0\|=\|\T\|$ and $\T x_0\perp_B \A x_0.$ We conclude the article with the following corollary which shows that a smooth multilinear map also satisfies the Bhatia-\v{S}emrl property. 

\begin{cor}\label{Corollary:5}
Let $\X_i,~\Y$ be finite-dimensional Banach spaces; $1\leq i \leq k,$ and let the norm of $\B^{k}\left(\X_1\times \dots \times \X_k, \Y\right)$ is weakly differentiable at $\T$. Then given any $\A\in \B^{k}\left(\X_1\times \dots \times \X_k, \Y\right)$ with $\T\perp_B \A$ implies that there exists $\x_0\in M_\T$ (i.e., $\|\T\x_0\|=\|\T\|$) such that $\T \x_0\perp_B \A \x_0.$
\end{cor}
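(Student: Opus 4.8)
The plan is to combine Theorem \ref{Theorem:5} with Corollary \ref{Corollary:4} (equivalently, Theorem \ref{Theorem:3}) to pin down the structure of both $M_\T$ and the scalar set $\Omega'(\T,\A)$, and then read off the desired orthogonality relation directly. First I would invoke $\T\perp_B\A$ together with Corollary \ref{Corollary:1}, which (in the finite-dimensional setting) gives $0\in\C(\Omega'(\T,\A))$. Since $\T$ is smooth, Theorem \ref{Theorem:5}(iii) tells us that $\Omega'(\T,\A)$ is a singleton; combining these two facts forces $\Omega'(\T,\A)=\{0\}$. Moreover Theorem \ref{Theorem:5} identifies this single element explicitly as $y_0^*(\A\x_0)$, where $\x_0=(x_1,\dots,x_k)\in M_\T$ is the (essentially unique up to unimodular scalars) norm-attaining point of $\T$ given by Theorem \ref{Theorem:5}(ii), and $y_0^*$ is the unique support functional at $\T\x_0$. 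Hence we obtain $y_0^*(\A\x_0)=0$ while, by definition of $J(\T\x_0)$, we also have $y_0^*(\T\x_0)=\|\T\x_0\|=\|\T\|$.

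Next I would translate the scalar identity $y_0^*(\A\x_0)=0$, $y_0^*(\T\x_0)=\|\T\x_0\|$ into a Birkhoff--James orthogonality statement in $\Y$ using James's characterization (quoted in the introduction): an element $u\in\Y$ satisfies $u\perp_B v$ if and only if there is $f\in S_{\Y^*}$ with $f(u)=\|u\|$ and $f(v)=0$. Taking $u=\T\x_0$, $v=\A\x_0$, and $f=y_0^*$, we get precisely $\T\x_0\perp_B\A\x_0$. This completes the argument, since $\x_0\in M_\T$ by construction.

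The only genuinely delicate point is making sure that the $\x_0$ furnished by Theorem \ref{Theorem:5}(ii) is the \emph{same} $\x_0$ appearing in the identification $\Omega'(\T,\A)=\{y_0^*(\A\x_0)\}$ of part (iii), and that the unique support functional $y_0^*\in J(\T\x_0)$ is unambiguous — but both are guaranteed by smoothness of $\T\x_0$ (itself part of Theorem \ref{Theorem:5}(ii)) and by the proof of (ii)$\implies$(iii) in Theorem \ref{Theorem:5}, which shows the value $y^*(\A\x)$ is independent of the choice of $\x\in M_\T$. I expect no real obstacle beyond bookkeeping: the statement is essentially a repackaging of Theorem \ref{Theorem:5}(iii) via James's theorem. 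One should also remark for the reader that this recovers, for $k=1$, the classical fact that smooth operators on finite-dimensional spaces enjoy the Bhatia--\v{S}emrl property, and that in the Hilbert space case $\T\x_0\perp_B\A\x_0$ becomes $\langle\A\x_0,\T\x_0\rangle=0$, consistent with Corollary \ref{Corollary:2}.
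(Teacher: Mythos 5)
Your proposal is correct and follows essentially the same route as the paper: both arguments use Theorem \ref{Theorem:5} to identify $\Omega'(\T,\A)=\{y_0^*(\A\x_0)\}$ with $\x_0\in M_\T$ and $y_0^*\in J(\T\x_0)$, then apply Corollary \ref{Corollary:1} to force $y_0^*(\A\x_0)=0$, and conclude $\T\x_0\perp_B\A\x_0$ via James's characterization. The only difference is cosmetic — you spell out the final appeal to James's theorem, which the paper leaves implicit.
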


\begin{proof}
Let $\A\in \B^{k}\left(\X_1\times \dots \times \X_k, \Y\right)$ be arbitrary with $\T\perp_B \A.$ It follows from (i)$\implies$(ii) of Theorem \ref{Theorem:5} that
\[M_\T:=\left\{ \left(\mu^{(1)} x_1,\dots, \mu^{(k)} x_k\right)\in S_{\X_1}\times \dots \times S_{\X_k}:~\left(\mu^{(1)},    \dots, \mu^{(k)}\right)\in \mathbb{S}^k\right\},\]
and the norm of $\Y$ is weakly differentiable at $\T \x_0$, where $\x_0=(x_1,\dots,x_k)$. Then again from (ii)$\implies$(iii) of Theorem \ref{Theorem:5}, we have 
\[\Omega'(\T,\A)=\{y_0^*(\A \x_0)\},\]
where $y_0^*\in J(\T\x_0)$. Since $\T\perp_B \A$, applying Corollary \ref{Corollary:1}, we have $y_0^*(\A\x_0)=0$. Hence, $\T \x_0\perp_B \A \x_0$ and the proof is complete.
\end{proof}

\vspace{5mm}

\begin{acknowledgement}
 The author expresses sincere gratitude to Dr. Debmalya Sain for simulating conversations during the course of the work. The author is indebted to Professor Satya Bagchi for his guidance and constant encouragement. 
\end{acknowledgement}

\end{document}